\def\inte#1{
\displaystyle\mathop{#1\kern0pt}^\circ }
\let\pa=\partial
\let\d=\delta
\let\e=\varepsilon
\let\f=\frac
\let\p=\psi
\let\D=\Delta
\let\Om=\Omega
\let\wt=\widetilde
\def\cB{{\mathcal B}}
\def\cC{{\mathcal C}}
\def\cR{{\mathcal R}}
\def\cS{{\mathcal S}}
\def\pa{\partial}
\def\grad{\nabla}
\def\v{{\rm v}}
\def\virgp{\raise 2pt\hbox{,}}
\def\cdotpv{\raise 2pt\hbox{;}}
\def\eqdefa{\buildrel\hbox{\footnotesize def}\over =}
\def\C{\mathop{\mathbb C\kern 0pt}\nolimits}
\def\DD{\mathop{\mathbb D\kern 0pt}\nolimits}
\def\EE{\mathop{{\mathbb E \kern 0pt}}\nolimits}
\def\K{\mathop{\mathbb K\kern 0pt}\nolimits}
\def\N{\mathop{\mathbb N\kern 0pt}\nolimits}
\def\Q{\mathop{\mathbb Q\kern 0pt}\nolimits}
\def\R{\mathop{\mathbb R\kern 0pt}\nolimits}
\def\SS{\mathop{\mathbb S\kern 0pt}\nolimits}
\def\ZZ{\mathop{\mathbb Z\kern 0pt}\nolimits}
\def\TT{\mathop{\mathbb T\kern 0pt}\nolimits}
\def\P{\mathop{\mathbb P\kern 0pt}\nolimits}
\newcommand{\la}{\lambda}
\newcommand{\Z}{{\ZZ}}
\def\dv{\mbox{div}}
\def\dive{\mathop{\rm div}\nolimits}
\def\Supp{\mathop{\rm Supp}\nolimits\ }
\def\no{\noindent}
\def\na{\nabla}
\def\p{\partial}
\newcommand{\beq}{\begin{equation}}
\newcommand{\eeq}{\end{equation}}
\newcommand{\ben}{\begin{eqnarray}}
\newcommand{\een}{\end{eqnarray}}
\newcommand{\beno}{\begin{eqnarray*}}
\newcommand{\eeno}{\end{eqnarray*}}
\newcommand{\andf}{\quad\hbox{and}\quad}
\newtheorem{defi}{Definition}[section]
\newtheorem{thm}{Theorem}[section]
\newtheorem{lem}{Lemma}[section]
\newtheorem{rmk}{Remark}[section]
\newtheorem{prop}{Proposition}[section]
\renewcommand{\theequation}{\thesection.\arabic{equation}}
\begin{document}
\title{Global well-posedness for the $2$-D  inhomogeneous
incompressible Navier-Stokes system with large initial data in critical spaces}

\author{ Hammadi Abidi \footnote{D\'epartement de Math\'ematiques, Facult\'e des Sciences de Tunis Universit\'e de Tunis El Manar, 2092 Tunis, Tunisia . Email: {\tt hamadi.abidi@fst.rnu.tn}.} \and
Guilong Gui \footnote{Center for Nonlinear Studies, School of Mathematics, Northwest University, Xi'an 710069, China. Email: {\tt glgui@amss.ac.cn}.}}

\date{}

\setcounter{equation}{0}
\maketitle
\begin{abstract}
Without any smallness assumption, we prove the global unique solvability of the $2$-D incompressible inhomogeneous Navier-Stokes equations with initial data in the critical Besov space, which is almost the energy space in the sense that they have the same scaling in terms of this $2$-D system.
\end{abstract}

\noindent {\sl Keywords:}  Inhomogeneous Navier-Stokes equations,  Well-posedness, Critical spaces
\vskip 0.2cm

\noindent {\sl AMS Subject Classification:} 35Q30, 76D05


\renewcommand{\theequation}{\thesection.\arabic{equation}}
\setcounter{equation}{0}

\section{Introduction}

In this paper, we consider the Cauchy problem of the
following $2$-D incompressible inhomogeneous Navier-Stokes equations
\begin{equation}
 \left\{\begin{array}{l}
\displaystyle \pa_t \rho + \dv (\rho u)=0,\qquad (t,x)\in\R^+\times\R^d, \\
\displaystyle \pa_t (\rho u) + \dv (\rho u\otimes u) -\dv (\mu \mathbb{D}(u))+\grad\Pi=0, \\
\displaystyle \dv\, u = 0, \\
\displaystyle \rho|_{t=0}=\rho_0,\quad \rho u|_{t=0}=m_0,
\end{array}\right. \label{1.1general}
\end{equation}
where $\rho, u=(u_1,u_2,...,u_d)$ stand for the density and  velocity
of the fluid respectively, $d=2,\,3$,  $\mathbb{D}(u) =\frac{1}{2}(\pa_{i}u_{j}+\pa_{j} u_{i}),$ $\Pi$  is a scalar
pressure function, and in general, the viscous coefficient $\mu(\rho)$ is a smooth, positive function on
$[0,\infty).$ Such system describes a fluid which is obtained by
mixing two miscible fluids that are incompressible and that have
different densities. It may also describe a fluid containing a
melted substance. One may check \cite{LP} for the detailed
derivation.

When $\mu(\rho)$ is independent of $\rho,$ that is, $\mu$ is a positive
constant (taking $\mu=1$ for simplicity), and $\rho_0$ is bounded away from $0$,
the system is rewritten as the form
\begin{equation}
\left\{\begin{array}{l}
\displaystyle \pa_t \rho + \dv (\rho u)=0,\qquad (t,x)\in\R_+\times\R^d, \\
\displaystyle \rho\pa_t u +\rho(u\cdot\nabla)u -\Delta u+\grad\Pi=0, \\
\displaystyle \dv\, u = 0, \\
\displaystyle(\rho,u)|_{t=0}=(\rho_0,u_0).
\end{array}\right.
\label{1.1}
\end{equation}
Kazhikov \cite{KA}
proved the global existence of strong solutions to
the system \eqref{1.1} for small smooth data in three space dimensions and all smooth data in
two dimensions, also proved that the $d$-dimensional system \eqref{1.1general} (with $d=2, 3$)
has at least one global weak solutions in the energy space. However, the uniqueness of both type weak solutions
has not be solved. Considering the case of the bounded domain $\Om$ with homogeneous Dirichlet boundary condition for the fluid velocity, Lady\v zenskaja and Solonnikov  \cite{LS} first
addressed the question of unique resolvability of \eqref{1.1general}. In particular, under the assumptions that $u_0\in W^{2-\frac{2}{p},p}(\Om)$ $(p>2)$ is
divergence free and vanishes on  $\p\Om$ and that $\rho_0\in C^1(\Om)$
is bounded away from zero, then they \cite{LS} proved global well-posedness of \eqref{1.1general} in dimension $d=2$. Similar results were obtained by Danchin \cite{danchin04} in $\R^2$
with initial data in the almost critical Sobolev spaces.

In general, DiPerna and Lions \cite{DL, LP} proved the global
existence of weak solutions to \eqref{1.1general} in energy space in any space
dimensions. Yet the uniqueness and regularities of such weak
solutions are big open questions  even in two space dimension, as
was mentioned by Lions in \cite{LP}.

On the other hand, if the density $\rho$ is away from zero, we
denote by $a\eqdefa{\rho^{-1}}-1$, then the system \eqref{1.1} can be
 equivalently reformulated as
\begin{equation}
\quad\left\{\begin{array}{l}
\displaystyle \pa_t a + u \cdot \grad a=0,\qquad (t,x)\in \R^+\times\R^d,\\
\displaystyle \pa_t u + u \cdot \grad u+ (1+a)(\grad\Pi-\Delta\,u)=0, \\
\displaystyle \dv\, u = 0, \\
\displaystyle (a, u)|_{t=0}=(a_0, u_{0}).
\end{array}\right.\label{1.1-aform}
\end{equation}
Just as the classical Navier-Stokes system, which is the case when
$a=0$ in \eqref{1.1-aform}, the system \eqref{1.1-aform} also has a scaling-invariant transformation. Indeed if $(a,
u)$ solves \eqref{1.1-aform} with initial data $(a_0, u_0)$, then for $\forall
\, \ell>0$,
\begin{equation}\label{scaling-inv}
(a, u)_{\ell}(t, x) \eqdefa (a(\ell^2\cdot, \ell\cdot), \ell
u(\ell^2 \cdot, \ell\cdot))
\end{equation}
is also a solution of \eqref{1.1-aform} with initial data $(a_0(\ell\cdot),\ell
u_0(\ell\cdot))$. Some results about global existence and uniqueness of the solutions in critical spaces for small data were proved in \cite{A,A-P,DM1}. Recently, we \cite{A-G-Z-2} first investigated
the well-posedness of the 3-D incompressible inhomogeneous Navier-Stokes equation \eqref{1.1} with initial data $(a_0,u_0)$ in the critical spaces and without size restriction on $a_0.$

For the two-dimensional case, when the density and the velocity have more regularity, R. Danchin \cite{danchin04} proved the global well-poedness result of the system \eqref{1.1}. More precisely, if $0<m\le\rho_0\le M,$
$\frac{1}{\rho_0}-1\in H^{1+\alpha}$ and $u_0\in H^\beta$ with $\alpha,\beta>0$,  the system \eqref{1.1} is globally well-posed. Recently, some improvements of this result have been achieved. Paicu, Zhang, and Zhang \cite{PZZ} investigated the unique solvability  of the global solution of the 2-D system \eqref{1.1} if $0<m\le\rho_0\le M$ and $u_0\in H^s$ with $s>0$, and the first author in the paper and Zhang \cite{A-Z} proved the global existence and uniqueness of the solution to the 2-D system \eqref{1.1} if $0<m\le\rho_0\le M,$
$\frac{1}{\rho_0}-1\in\dot B^1_{2,1}\cap\dot B^{\alpha}_{\infty,\infty}$ with $\alpha>0$
and $u_0\in\dot B^0_{2,1}$, and Danchin and Mucha \cite{DM} studied the existence and uniqueness of global solution to \eqref{1.1} if $0\le\rho_0\le M,$ $\int_{\R^2}\rho_0>0$ and $u_0\in H^1.$

In summary, all the well-posedness results of the $2$-D system \eqref{1.1} obtained so far are under the additional assumption that the density or the velocity has more regularity compared to the critical spaces.

In this paper, we investigate the global well-posedness of the $2$-D  inhomogeneous
incompressible Navier-Stokes system \eqref{1.1} with large initial data in the critical space, which is {\bf almost} the energy space in the sense that they have the same scaling in terms of the system \eqref{1.1general} (see Remark \ref{rmkthm1.1}).

The main theorem of the paper is stated as follows.
\begin{thm}\label{thm1.1}
\par Let  $m, M$ be two positive constants and $\varepsilon\in (0,1).$
Let $u_0\in\dot B^{0}_{2,1}(\R^2)$ be a
solenoidal vector field and $\frac{1}{\rho_0}-1\in\dot B^{\varepsilon}_{\frac{2}{\varepsilon},1}(\R^2)$
satisfy
\beq \label{thma.1}
m\leq\rho_0\leq M.
\eeq
Then  the system \eqref{1.1} has a  global  solution $(\rho,u,\na\Pi)$  with
\begin{equation}\label{AG}
\begin{aligned}
&\frac{1}{\rho}-1 \in C(\R_+;\,\dot B^{\varepsilon}_{\frac{2}{\varepsilon},1}(\R^2))
\\&
u\in C(\R_+;\,\dot B^{0}_{2,1}(\R^2))
\cap L^1_{loc}(\R_+;\,\dot B^{2}_{2,1}(\R^2))\andf
\\&
\p_tu,\,
\na\Pi\in L^1_{loc}(\R_+;\,\dot B^{0}_{2,1}(\R^2)).
\end{aligned}
\end{equation}
Furthermore, if, in addition, $\frac{1}{\rho_0}-1\in B^1_{2,1},$
then the solution is unique.
\end{thm}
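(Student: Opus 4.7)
\emph{Strategy.} The plan is to construct the global solution by a Friedrichs-type approximation: smooth the data via $(a_0^n, u_0^n) = (\dot S_n a_0, \dot S_n u_0)$ with $a_0 \eqdefa 1/\rho_0 - 1$, solve the regularized system using existing local well-posedness results (e.g.\ \cite{danchin04,A,PZZ}) to obtain smooth approximations $(a^n,u^n,\nabla\Pi^n)$ on a maximal interval $[0,T_n^*)$, then establish uniform-in-$n$, global-in-$t$ a priori bounds in the target critical spaces, pass to the limit by compactness, and finally prove uniqueness in Lagrangian coordinates under the stronger hypothesis $a_0 \in B^1_{2,1}$.

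\emph{A priori estimates.} The density bound $m\le\rho^n\le M$ is preserved by the transport equation together with $\dive u^n = 0$, so $a^n\in L^\infty$. The classical energy identity gives $u^n\in L^\infty(L^2)\cap L^2(\dot H^1)$. To propagate the critical velocity regularity $u\in \tilde L^\infty(\dot B^0_{2,1})\cap L^1(\dot B^2_{2,1})$, I would rewrite the momentum equation as the perturbed heat system
\begin{equation*}
\partial_t u^n - \Delta u^n + \nabla\Pi^n = -u^n\cdot\nabla u^n + a^n(\Delta u^n - \nabla\Pi^n),
\end{equation*}
apply $\dot\Delta_j$, and exploit heat-kernel maximal regularity in $\dot B^0_{2,1}$ together with paraproduct/product laws in critical Besov spaces. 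The coefficient $a^n$ is then controlled through the Osgood-type transport estimate
\begin{equation*}
\|a^n\|_{\tilde L^\infty_t(\dot B^\varepsilon_{2/\varepsilon,1})} \le \|a_0\|_{\dot B^\varepsilon_{2/\varepsilon,1}}\exp\Bigl(C\int_0^t\|\nabla u^n\|_{L^\infty}\,d\tau\Bigr),
\end{equation*}
and the two-dimensional embedding $\dot B^1_{2,1}(\R^2)\hookrightarrow L^\infty$ converts the $L^1(\dot B^2_{2,1})$ bound on $u^n$ into an $L^1(L^\infty)$ bound on $\nabla u^n$, so that the $u\leftrightarrow a$ feedback closes by a bootstrap argument.

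\emph{Passage to the limit, uniqueness, and main obstacle.} Once the bounds are uniform, Aubin--Lions compactness yields a limit $(a,u,\nabla\Pi)$ satisfying \eqref{AG}. For uniqueness, I would follow the Lagrangian-coordinate strategy of Danchin--Mucha: the transformation trivializes the density equation and reduces stability to estimates for a quasilinear Stokes-type system, which close in critical norms precisely because the hypothesis $a_0\in B^1_{2,1}$ provides the extra regularity needed to compare two Lagrangian flows. The main difficulty is closing the critical a priori estimate for $u$ globally without any smallness on $a_0$: the variable coefficient $(1+a)$ in front of the principal part of the velocity equation forces a delicate commutator analysis in critical Besov spaces, and the nonlinear feedback between $a$ and $u$ must be broken using the 2D energy identity, whose scaling matches that of $\dot B^0_{2,1}$, to anchor what would otherwise be a supercritical coupling.
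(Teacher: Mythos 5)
There is a genuine gap in the existence part of your argument. You propose to obtain global existence by closing a \emph{global-in-time} uniform critical a priori estimate via a bootstrap on the perturbed heat system $\partial_t u-\Delta u+\nabla\Pi=-u\cdot\nabla u+a(\Delta u-\nabla\Pi)$. Without any smallness of $a$, the source term $a(\Delta u-\nabla\Pi)$ contributes $\|a\|_{\dot B^{\varepsilon}_{2/\varepsilon,1}}\|(\Delta u,\nabla\Pi)\|_{L^1_t(\dot B^0_{2,1})}$, which cannot be absorbed into the left-hand side; this is exactly the obstruction the authors identify at the outset. Their remedy is to split $a=(a-\dot S_ka)+\dot S_ka$ with $k$ large so that $a-\dot S_ka$ is small, but the low-frequency part then produces factors of order $2^k\sqrt{t}$ (see Propositions 3.1 and 3.2), so the critical estimate $\|u\|_{L^1_t(\dot B^2_{2,1})}\le C_0$ is only obtained on a \emph{small} time interval $[0,T_1]$. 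Globality is then reached by an entirely different mechanism that your proposal is missing: from $u\in L^1([0,T_1];\dot B^2_{2,1})\cap L^\infty([0,T_1];L^2)$ one picks $t_0\in(0,T_1)$ with $u(t_0)\in H^1(\R^2)$ and invokes the known global well-posedness results of Paicu--Zhang--Zhang and Danchin--Mucha for such (more regular) data. Your closing remark that the 2D energy identity ``anchors'' the $a\leftrightarrow u$ coupling is not a substitute for this step; as written, the global bootstrap is an unproven assertion of precisely the hardest point.

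On uniqueness you take a genuinely different route. The paper stays in Eulerian coordinates: it estimates $\delta u$ in $L^\infty_t(B^{-1}_{2,\infty})\cap\widetilde L^1_t(B^1_{2,\infty})$ (Proposition 2.2), controls $\nabla\delta\Pi$ by a duality argument for the elliptic equation $\dive((1+a)\nabla\delta\Pi)=\dive G$ (Proposition 2.3), bounds $\delta a$ only in $L^2$ via the transport equation --- which loses a derivative and requires $\delta u\in L^1(L^\infty)$ --- and closes with a logarithmic interpolation inequality plus Osgood's lemma. A Lagrangian argument in the spirit of Danchin--Mucha is a plausible alternative since $u\in L^1_{loc}(\dot B^2_{2,1})$ gives $\nabla u\in L^1_{loc}(L^\infty)$, but your stated motivation is off: the virtue of Lagrangian coordinates is that the density difference never needs to be estimated at all (the density is frozen along trajectories), not that $a_0\in B^1_{2,1}$ helps ``compare two Lagrangian flows.'' In the paper the hypothesis $a_0\in B^1_{2,1}$ is used to run the Eulerian pressure-duality estimate and to make $\|a-S_ka\|_{B^1_{2,1}}$ small; if you go Lagrangian you would need to verify that the quasilinear Stokes stability estimates actually close at the low regularity $u_0\in\dot B^0_{2,1}$, which you assert but do not check.
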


\begin{rmk}\label{rmkthm1.1}
\par Compared to the theorem of global weak solutions in the energy space (\cite{KA,DL, LP}), where $\rho_0-1 \in L^\infty(\mathbb{R}^2)$ and $u_0 \in L^2(\mathbb{R}^2)$, especially in the non-vacuum case, in the assumptions of Theorem \ref{thm1.1}, the initial density $\rho_0 -1\in
B^1_{2,1}(\R^2))$ has the same scaling as $\rho_0-1 \in L^\infty(\mathbb{R}^2)$ in terms of the scaling-invariant transformation \eqref{scaling-inv} of the system \eqref{1.1}, and the initial velocity $u_0\in\dot B^{0}_{2,1}(\R^2)$  has the same scaling and regularity as $u_0 \in L^2(\mathbb{R}^2)$ in the energy space.
\end{rmk}

The proof of Theorem \ref{thm1.1} is completed in Sections 2-4. We now present a summary of the principal
difficulties we encounter in our analysis as well as a sketch of the key ideas used in our proof.

The first difficulty to the proof of Theorem \ref{thm1.1} lies in the fact
that when $a$ is not small, we can not use the classical arguments
in \cite{A, A-P} to deal with the following
linearized momentum equations of \eqref{1.1-aform}:
\begin{equation}\label{1.1-mom-linear-1}
\pa_t u -
(1+a)( \Delta u-\grad\Pi)=f,
\end{equation}
Motivated by \cite{DAN} and \cite{A-G-Z-2}, for some large enough integer $m,$ we
shall rewrite \eqref{1.1-mom-linear-1} as
\begin{equation}\label{1.1-mom-linear-2}
\pa_t u -
(1+ \dot{S}_ma)( \Delta u-\grad\Pi)=(a-\dot{S}_ma)( \Delta u-\grad\Pi)+f,
\end{equation}
with $\dot{S}_ma$ being partial sum of $a$ defined in \eqref{LP-decom-sum-1} in Appendix.
Then the basic energy method can be used to solve \eqref{1.1-mom-linear-2} when we deal with the global existence of the solution to \eqref{1.1-aform}.

The other difficulty in the proof of Theorem \ref{thm1.1} is how to deal with the uniqueness issue of the solution. In order to solve this problem, the crucial part is, roughly speaking, to control the $Lip$ norm of the velocity $u$, which will conserve all the regularities of the density and the velocity in the critical spaces, as well as the smallness of $a-\dot{S}_ma$ with $m$ being large enough.

 Usually, if the density or the velocity has more regularity than the critical space, the losing estimates for transport equations and the theory of transport-diffusion equations \cite{BCD} provide the boundness of the $Lip$ norm of the velocity, which will in turn close the estimates in the proof of global well-posedness of \eqref{1.1} (see \cite{PZZ, A-Z, DM}).

 In our critical case, there is no more regularity of the density or the velocity to rescue their losing regularity when we solve the transport equation of the density or the transport-diffusion equations in terms of the velocity.

For this reason, we need first to get, at least in a small time interval, the $L^1([0,T];\dot B^2_{2,1})$ estimate for the velocity field (see Proposition \ref{lema.1.1}), which relies on more elaborate application of Littlewood-Paley theory, as well as the basic energy and the estimate of $\|\nabla\Pi\|_{L^1_t(L^2)}$. Based on this, together with Osgood' lemma applied, we solve the uniqueness issue of the solution to \eqref{1.1} in the critical space.

The rest of the paper is organized as follows. In Section 2, we derive some qualitative
and analytic properties of the flow, as well as the necessary commutator estimates.  We prove the $L^1([0,T];\dot B^2_{2,1})$ estimate for the velocity field in Section 3. The proof of Theorem \ref{thm1.1} is completed in Section 4. Finally, we recall some basic ingredients of Littlwood-Paley theory in Appendix.

\medbreak \noindent{\bf Notations:} Let $A, B$ be two operators, we
denote $[A, B]=AB-BA,$ the commutator between $A$ and $B$. For
$a\lesssim b$, we mean that there is a uniform constant $C,$ which
may be different on different lines, such that $a\leq Cb$ and $C_0$ denotes a positive constant depending  only on the initial data.

For $X$ a Banach space and $I$ an interval of $\R,$ we denote by
${\mathcal{C}}(I;\,X)$ the set of continuous functions on $I$ with
values in $X,$ and by ${\mathcal{C}}_b(I;\,X)$ the subset of bounded
functions of ${\mathcal{C}}(I;\,X).$ For $q\in[1,+\infty],$ the
notation $L^q(I;\,X)$ stands for the set of measurable functions on
$I$ with values in $X,$ such that $t\longmapsto\|f(t)\|_{X}$ belongs
to $L^q(I).$

\renewcommand{\theequation}{\thesection.\arabic{equation}}
\setcounter{equation}{0}
\section{Preliminaries}
In this section, we shall first derive the following
commutator's estimate which will be frequently used throughout the
succeeding sections.
\begin{lem}
\label{lemcommu}
\par Let   $\alpha\in (-1,1),$ $(p,r)\in[1,\infty]^2,$ $u\in \dot
B^{\alpha}_{p,r}(\R^2)$ and $\nabla v\in L^\infty(\R^2)$ with
$\dive v=0.$ Then there holds
\begin{equation}\label{commutator-2}
\|[\dot\Delta_{q}, v\cdot \grad ]u\|_{L^p}
\lesssim
c_{q,r}2^{-q\alpha}\|\grad v\|_{L^{\infty}}\|u\|_{\dot{B}^{\alpha}_{p, r}}.
\end{equation}
\end{lem}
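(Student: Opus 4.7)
The plan is to apply Bony's paraproduct decomposition to $v\cdot\nabla u = v^i\partial_i u$ and carefully track how each piece interacts with $\dot\Delta_q$. Writing
\[
v^i\partial_i u = \dot T_{v^i}\partial_i u + \dot T_{\partial_i u}v^i + \dot R(v^i,\partial_i u),
\]
and applying the same decomposition to $v\cdot\nabla\dot\Delta_q u$, I would split the commutator into three natural blocks:
\[
[\dot\Delta_q,v\cdot\nabla]u
 = \underbrace{[\dot\Delta_q,\dot T_{v^i}]\partial_i u}_{\text{I}}
 + \underbrace{\bigl(\dot\Delta_q\dot T_{\partial_i u}v^i-\dot T_{\partial_i\dot\Delta_q u}v^i\bigr)}_{\text{II}}
 + \underbrace{\bigl(\dot\Delta_q\dot R(v^i,\partial_i u)-\dot R(v^i,\partial_i\dot\Delta_q u)\bigr)}_{\text{III}}.
\]

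Term I reduces to $\sum_{|j-q|\le 4}[\dot\Delta_q,\dot S_{j-1}v^i]\partial_i\dot\Delta_j u$ thanks to quasi-orthogonality. The standard first-order commutator estimate (writing $\dot\Delta_q$ as convolution with $2^{2q}h(2^q\cdot)$ and using a mean-value argument on $\dot S_{j-1}v^i$) yields
\[
\bigl\|[\dot\Delta_q,\dot S_{j-1}v^i]\partial_i\dot\Delta_j u\bigr\|_{L^p}
\lesssim 2^{-q}\|\nabla\dot S_{j-1}v\|_{L^\infty}\,2^{j}\|\dot\Delta_j u\|_{L^p}
\lesssim \|\nabla v\|_{L^\infty}\|\dot\Delta_j u\|_{L^p}
\]
on the diagonal $|j-q|\le 4$, and summing in $j$ produces the $c_{q,r}2^{-q\alpha}$ decay from the $\dot B^\alpha_{p,r}$ norm of $u$. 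For the two pieces in II, I would use Bernstein in the form $\|\dot\Delta_j v\|_{L^\infty}\lesssim 2^{-j}\|\nabla v\|_{L^\infty}$ together with $\|\dot S_{j-1}\partial_i u\|_{L^p}\lesssim \sum_{j'\le j-2}2^{j'(1-\alpha)}c_{j',r}\|u\|_{\dot B^\alpha_{p,r}}$; the restriction $\alpha<1$ makes this geometric series converge and supplies the $2^{-q\alpha}$ factor (similarly for the symmetric piece $\dot T_{\partial_i\dot\Delta_q u}v^i$, where the sum runs over $j\ge q+2$).

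For the remainder block III, the key trick is to invoke $\dive v=0$ to rewrite
\[
\dot R(v^i,\partial_i u)=\partial_i\dot R(v^i,u),\qquad
\dot R(v^i,\partial_i\dot\Delta_q u)=\partial_i\dot R(v^i,\dot\Delta_q u),
\]
which moves the derivative outside the remainder. Inside the remainder the usual estimate gives
\[
\|\dot\Delta_j v^i\,\widetilde{\dot\Delta}_j u\|_{L^p}\lesssim 2^{-j}\|\nabla v\|_{L^\infty}\,c_{j,r}2^{-j\alpha}\|u\|_{\dot B^\alpha_{p,r}},
\]
and summing over $j\ge q-N_0$ requires $1+\alpha>0$, i.e.\ $\alpha>-1$, to converge as $2^{-q(1+\alpha)}\tilde c_{q,r}\|\nabla v\|_{L^\infty}\|u\|_{\dot B^\alpha_{p,r}}$. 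Re-introducing the outer $\partial_i$ costs exactly one factor $2^q$, producing the desired bound $c_{q,r}2^{-q\alpha}\|\nabla v\|_{L^\infty}\|u\|_{\dot B^\alpha_{p,r}}$. Combining the $\ell^r$-summable sequences coming from the three blocks, relabeling the resulting sequence as $c_{q,r}$, and noting that the two endpoint restrictions $\alpha<1$ and $\alpha>-1$ are exactly what the above sums demand, completes the proof.

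\emph{Main obstacle.} The delicate point is the remainder block III: naively the remainder of $v^i$ and $\partial_i u$ loses one derivative on $u$, which cannot be absorbed by $\|\nabla v\|_{L^\infty}$ at the critical regularity. The divergence-free condition $\partial_i v^i=0$ is crucial, since it allows the derivative to be transferred \emph{out} of the remainder as an overall $\partial_i$; without it the estimate would require additional regularity on $v$.
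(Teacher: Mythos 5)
Your proposal is correct and follows essentially the same route as the paper's proof: the identical Bony splitting into a paraproduct commutator $[\dot\Delta_q,T_{v^j}]\partial_j u$, the block $T_{\partial_j u}v^j$ handled via $\|\dot S_{k-1}\nabla u\|_{L^p}\lesssim c_{k,r}2^{k(1-\alpha)}\|u\|_{\dot B^\alpha_{p,r}}$ (where $\alpha<1$ enters), and the remainder block where $\dive v=0$ is used to pull the derivative outside $\cR$ (where $\alpha>-1$ enters). The only difference is a cosmetic regrouping: you place the subtracted paraproduct $T_{\partial_j\dot\Delta_q u}v^j$ in your block II, whereas the paper absorbs it together with the subtracted remainder into its term $\mathcal{R}^3_q=-\sum_{k\geq q-2}\dot S_{k+2}\dot\Delta_q\partial_j u\,\dot\Delta_k v^j$.
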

\begin{proof}
Thanks to Bony's decomposition \eqref{bony} and the divergence free
condition of $v,$ we write
\begin{equation}\label{comma}
[\dot\Delta_q,v\cdot\nabla] u=\dot\Delta_q\bigl(\partial_j
\cR(u,v^j))+\dot\Delta_q\bigl(T_{\partial_j u}v^j\bigr)
-R(v^j,\dot\Delta_q\partial_j u)-[T_{v^j},\dot\Delta_q]\partial_j u\eqdefa
\sum_{i=1}^4 \mathcal{R}^i_q.
\end{equation}
For $\mathcal{R}^1_q=\sum_{k\geq q-3}\dot\Delta_q\partial_{j}(\dot\Delta_ku\widetilde{\dot\Delta}_kv^j)$, it follows from Lemma \ref{lem2.1} and the condition $\alpha >-1$ that
\beno \begin{split}
&\|\mathcal{R}^1_q\|_{L^p}\lesssim 2^{q}\sum_{k\geq q-3} \|\dot\Delta_ku\widetilde{\dot\Delta}_kv^j \|_{L^p}\lesssim 2^{q}\sum_{k\geq q-3} \|\dot\Delta_ku \|_{L^p}\|\widetilde{\dot\Delta}_kv^j \|_{L^\infty}\\
&
\lesssim 2^{q}\|\nabla\,v\|_{L^\infty}\sum_{k\geq q-3} \|\dot\Delta_ku \|_{L^p}2^{-k}\lesssim
c_{q,r}2^{-q\alpha}\|\grad v\|_{L^{\infty}}\|u\|_{\dot{B}^{\alpha}_{p, r}}.
\end{split}
\eeno
Notice that for $\alpha<1$,
\beno \begin{split}
&\|\dot S_{k-1}\nabla u\|_{L^p}\lesssim \sum_{\ell\le
k-2}2^{\ell}\|\dot\Delta_{\ell}u\|_{L^p}
\lesssim
c_{k,r}2^{k(1-\alpha)}\| u\|_{\dot B^{\alpha}_{p,r}},
\end{split}
\eeno
then for $ \mathcal{R}^2_q=\dot\Delta_q\bigl(T_{\partial_j u}v^j\bigr)=\sum_{\vert
q-k\vert\leq 4} \dot\Delta_q(\dot S_{k-1}\partial_j
u\dot\Delta_kv^j)$,
we get
\beno
\begin{split}
\nonumber\Vert \mathcal{R}^2_q\Vert_{L^p}
\lesssim
c_{q,r}2^{-q\alpha}\|\grad v\|_{L^{\infty}}\|u\|_{\dot{B}^{\alpha}_{p, r}}.
\end{split}
\eeno Whereas thanks to the properties to the support of Fourier
transform to $\dot S_{k+2}\dot\Delta_q\partial_j u,$ one has
$$
\mathcal{R}^3_q=-R(v^j,\dot\Delta_q\partial_j u)=-\sum_{k\geq
q-2}\dot S_{k+2}\dot\Delta_q\partial_j u\dot\Delta_k v^j,
$$
which implies that for $\alpha\in \mathbb{R}$
\beno \begin{split}
\Vert \mathcal{R}^3_q\Vert_{L^p}
&\lesssim
\Vert\dot\Delta_q u\Vert_{L^p} \sum_{k\geq q-2}2^{q-k}\,
\Vert\dot\Delta_k\nabla v\Vert_{L^\infty}\\
& \lesssim
c_{q,r}2^{-q\alpha}\Vert u\Vert_{\dot B^\alpha_{p,r}}\Vert\nabla
v\Vert_{L^\infty}.
\end{split}
\eeno
For the last term in \eqref{comma}, we write
$$
\mathcal{R}^4_q=[\dot\Delta_{q},T_{v^j}]\partial_j u= \sum_{\vert
k-q\vert\leq 4}[\dot\Delta_{q},S_{k-1}v^j]\dot\Delta_k\partial_j u,
$$
which along with the classical estimate (see \cite{BCD} for example)
$$
\begin{aligned}
\Vert[\dot S_{k-1}u^j,\dot\Delta_q]\dot\Delta_k\partial_j u\Vert_{L^p}
&\lesssim
2^{-q}
\|\nabla\dot S_{k-1}v\|_{L^\infty}\|\partial_{j}\dot\Delta_{k}u\|_{L^p}
\\&
\lesssim 2^{k-q}\Vert\nabla v\Vert_{L^\infty}\Vert\dot\Delta_k
u\Vert_{L^p},
\end{aligned}
$$
 yields that for $\alpha\in \mathbb{R}$
$$
\Vert \mathcal{R}^4_q\Vert_{L^p}
\lesssim
c_{q,r}2^{-q\alpha}\Vert u\Vert_{\dot B^\alpha_{p,r}}\Vert\nabla v\Vert_{L^\infty}.
$$
This achieves the proof of Lemma \ref{lemcommu}.
\end{proof}
Applying Lemma \ref{lemcommu} to the transport equation, we have
\begin{prop}\label{small-density-1}
\par Let $k \in \Z$, $\alpha\in[0,1),$ $(p,r)\in[1,\infty]^2,$ $a_0 \in \dot B^{\alpha}_{p,r}(\R^2)$, $\nabla u \in
L^1_T(L^\infty)$ with $\dive \, u=0$, and the function $a \in
{\mathcal{C}}([0,{T}];\, \dot B^{\alpha}_{p, r}(\R^2)))$ solves
\begin{equation}\label{equation-T}
\begin{cases}
&\partial_ta+u\cdot\nabla a=0,\quad (t, x) \in [0,{T}]\times \mathbb{R}^2,\\
&a|_{t=0}=a_0 \quad x\in \mathbb{R}^2.
\end{cases}
\end{equation}
 Then there holds that for $\forall \,
t \in (0, T]$
\begin{equation}\label{dens}
\|a-\dot S_ka\|_{\widetilde{L}^\infty_t(\dot B^{\alpha}_{p,r})} \leq
\Bigl(\sum_{q\geq k}2^{rq\alpha}\|\dot\Delta_q a_0\|_{L^p}^r\Bigr)^{\f1r} +\|a_0\|_{\dot
B^{\alpha}_{p,r}} (e^{CU(t)}-1)
\end{equation}
with $U(t)=\|\nabla u\|_{L^1_t(L^\infty)}$.
\end{prop}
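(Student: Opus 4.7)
The plan is to apply the dyadic block $\dot\Delta_q$ to the transport equation, use the commutator estimate of Lemma \ref{lemcommu} to control the resulting error, and then exploit the divergence-free condition together with a Gronwall-type argument summed over the high-frequency tail $q \geq k$.

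More precisely, I would first localize \eqref{equation-T} in frequency to obtain
\[
\partial_t \dot\Delta_q a + u \cdot \nabla \dot\Delta_q a = -[\dot\Delta_q, u \cdot \nabla] a.
\]
Since $\dive u = 0$, an $L^p$ energy estimate on this equation gives
\[
\|\dot\Delta_q a(t)\|_{L^p} \leq \|\dot\Delta_q a_0\|_{L^p} + \int_0^t \|[\dot\Delta_q, u\cdot\nabla]a(s)\|_{L^p}\, ds.
\]
Applying Lemma \ref{lemcommu} (which requires $\alpha \in [0,1) \subset (-1,1)$), the commutator is controlled by $c_{q,r}(s)\, 2^{-q\alpha}\|\nabla u(s)\|_{L^\infty}\|a(s)\|_{\dot B^\alpha_{p,r}}$ with $\{c_{q,r}(s)\}_q$ an $\ell^r$-unit sequence for each $s$.

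Next, I would multiply by $2^{q\alpha}$, take the $\ell^r(\Z)$-norm, and apply Gronwall's lemma to obtain the full persistence estimate
\[
\|a(t)\|_{\dot B^\alpha_{p,r}} \leq \|a_0\|_{\dot B^\alpha_{p,r}} e^{C U(t)}.
\]
For the high-frequency truncation $a - \dot S_k a$, I would observe that by definition of the norm
\[
\|a - \dot S_k a\|_{\widetilde L^\infty_t(\dot B^\alpha_{p,r})} = \Bigl(\sum_{q \geq k-1} 2^{rq\alpha} \sup_{s \in [0,t]}\|\dot\Delta_q a(s)\|_{L^p}^r \Bigr)^{1/r},
\]
apply the pointwise bound above, take the time-sup inside, and then the $\ell^r$ norm. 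The initial-data term produces exactly the tail sum $\bigl(\sum_{q\geq k} 2^{rq\alpha}\|\dot\Delta_q a_0\|_{L^p}^r\bigr)^{1/r}$, while the commutator contribution is bounded by (using Minkowski and the $\ell^r$-summability of $c_{q,r}$)
\[
C\int_0^t \|\nabla u(s)\|_{L^\infty} \|a(s)\|_{\dot B^\alpha_{p,r}}\, ds \leq C\|a_0\|_{\dot B^\alpha_{p,r}}\int_0^t \|\nabla u(s)\|_{L^\infty}\, e^{CU(s)}\, ds,
\]
and the last integral evaluates to $\frac{1}{C}\bigl(e^{CU(t)}-1\bigr)$, yielding \eqref{dens}.

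The main obstacle I expect is the bookkeeping needed to keep the commutator contribution summable in the $\widetilde L^\infty_t(\dot B^\alpha_{p,r})$ norm: one has to move the supremum in time inside before taking $\ell^r$ in $q$, and to verify that Lemma \ref{lemcommu} produces a genuinely $\ell^r$-summable sequence $c_{q,r}(s)$ whose $\ell^r$ norm is bounded uniformly in $s$ (this is why the restriction $\alpha \in [0,1)$, together with $\alpha > -1$, is used). Once this is handled, combining the tail estimate on the data with the Gronwall bound on $\|a\|_{\dot B^\alpha_{p,r}}$ gives the stated inequality directly.
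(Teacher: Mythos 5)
Your proposal is correct and follows essentially the same route as the paper: frequency-localize the transport equation, use the maximum principle ($L^p$ estimate under $\dive u=0$) plus the commutator bound of Lemma \ref{lemcommu}, run Gronwall to get $\|a(t)\|_{\dot B^\alpha_{p,r}}\le \|a_0\|_{\dot B^\alpha_{p,r}}e^{CU(t)}$, and then re-insert this into the dyadic inequality before taking the $\ell^r$ norm over the high frequencies, so that the data term survives as the tail sum and the commutator term integrates to $\|a_0\|_{\dot B^\alpha_{p,r}}(e^{CU(t)}-1)$. The only cosmetic difference is your summation starting at $q\ge k-1$ versus the paper's $q\ge k$ (both are the same standard abuse, up to the overlap of adjacent dyadic blocks), so no substantive gap.
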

\begin{proof}
Applying $\dot \Delta_q$ to \eqref{equation-T} yields
\begin{equation}\label{equation-T-1}
\pa_t \dot \Delta_q a+u \cdot \grad \dot \Delta_q a=[u \cdot
\grad,\dot \Delta_q]a
\end{equation}
From the maximum principle we deduce
\begin{equation}\label{equation-T-2}
\|\dot \Delta_q a(t)\|_{L^p} \leq  \|\dot \Delta_q a_0\|_{L^p}+
\int_0^t \|[u \cdot \grad, \dot \Delta_q]a\|_{L^p}(\tau) \, d\tau.
\end{equation}
While thanks to \eqref{commutator-2}, we have
\begin{equation*}\label{est-22}
\|[u \cdot \grad, \dot \Delta_q]a\|_{L^p} \lesssim c_{q,r} 2^{-q\alpha}
\|\nabla u\|_{L^\infty} \|a\|_{\dot B^{\alpha}_{p,r}},
\end{equation*}
which along with \eqref{equation-T-2} implies that
\begin{equation*}
\|a(t)\|_{\dot B^{\alpha}_{p,r}}
\leq
\|a_0\|_{\dot B^{\alpha}_{p,r}}+C \int_0^t\|\nabla u\|_{L^1_{\tau}(L^\infty)} \|a(\tau)\|_{\dot B^{\alpha}_{p,r}}\,d\tau.
\end{equation*}
Hence, it follows from Gronwall's inequality that
\begin{equation}\label{density-est-1}
\|a(t)\|_{\dot B^{\alpha}_{p,r}}
\leq
C\|a_0\|_{\dot B^{\alpha}_{p,r}}e^{C\|\nabla u\|_{L^1_t(L^\infty)}},
\end{equation}
from which, we use \eqref{equation-T-2} again to deduce that
\begin{equation}\label{equation-T-3}
2^{q\alpha}\|\dot \Delta_q a(t)\|_{L^{\infty}_t (L^p)} \leq
2^{q\alpha}\|\dot \Delta_q a_0\|_{L^p}+C\|a_0\|_{\dot B^{\alpha}_{p,r}}
\int_0^t c_{q,r}(\tau)\|\nabla u(\tau)\|_{L^\infty} e^{C\|\nabla u\|_{L^1_{\tau}(L^\infty)}}\, d\tau.
\end{equation}
Then, taking the $\ell^r$ norm in terms of $q\in\{q \geq k\}$  leads to
\begin{equation*}
\|a-\dot S_ka\|_{\widetilde{L}^\infty_t(\dot B^{\alpha}_{p,r})}
\leq
\Bigl(\sum_{q\geq k}2^{rq\alpha}\|\dot\Delta_q a_0\|_{L^p}^r\Bigr)^{\f1r} +\|a_0\|_{\dot B^{\alpha}_{p,r}}
\int_0^t C\,U'(\tau) e^{C\,U(\tau)}\, d\tau,
\end{equation*}
which implies \eqref{dens}.
\end{proof}

\noindent To prove the uniqueness part of Theorem \ref{thm1.1}, we need the following Propositions.
\begin{prop}\label{prop-uniqueness-1}
\par Let $u_0\in B^{-1}_{2,\infty}(\R^2)$ and $v$ be a divergence free
vector field satisfying $v\in L^1_T(B^1_{\infty,1})$. Let
$f\in\widetilde L^1_T( B^{-1}_{2,\infty}),$
and
$a \in\widetilde L^\infty_T( B^{2}_{2,1})$ with $
1+a\geq c_1>0,$ we assume that
$u \in L^{\infty}_T( B^{-1}_{2,\infty})
\cap\widetilde L^1_T( B^{1}_{2, \infty})$ and $\grad\Pi\in
\widetilde L^1_T( B^{-1}_{2,\infty}),$ which solves
\begin{equation}\label{model-unique-1}
\begin{cases}
\displaystyle\partial_t u +v \cdot \grad u -(1+a)( \Delta u-\nabla \Pi) = f,\quad (t, x) \in \R_+\times\R^2,\\
\displaystyle{\mathop{\rm div}}\, u=0,\\
\displaystyle u_{|t=0}=u_0.
\end{cases}
\end{equation}
Then there holds:
\begin{equation}\label{estimate-uniqueness-1}
\begin{aligned}
\| u\|_{L^\infty_T(B^{-1}_{2,\infty})}
+\|u\|_{\widetilde L^1_T(B^{1}_{2, \infty})}\leq&
Ce^{C\bigl(T+T\|a\|_{\widetilde L^\infty_T(B^2_{2,1})}^2+\|v\|_{L^1_T(B^1_{\infty,1})}\bigr)}
\\&\quad\times\Bigl\{\|u_0\|_{B^{-1}_{2,\infty}}
+\|f\|_{\widetilde L^1_T(B^{-1}_{2,\infty})}
+\|a\|_{\widetilde L^\infty_T(B^{1}_{2,1})}
\|\nabla\Pi\|_{\widetilde L^1_T(B^{-1}_{2,\infty})}\Bigr\}.
\end{aligned}
\end{equation}
\end{prop}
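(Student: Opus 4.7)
The plan is to perform a dyadic $L^2$ energy estimate on the localized equations, combined with Bony decompositions to handle the variable-coefficient term $a(\Delta u-\nabla\Pi)$. First I would apply the inhomogeneous block $\Delta_q$ to \eqref{model-unique-1}, take the $L^2$ inner product with $\Delta_q u$, and use $\dive\, u=\dive\, v=0$ to discard both the pressure and the self-convection of $\Delta_q u$ by $v$. This yields
$$
\tfrac12\tfrac{d}{dt}\|\Delta_q u\|_{L^2}^2 + \|\nabla\Delta_q u\|_{L^2}^2 \leq \|\Delta_q u\|_{L^2}\Bigl(\|\Delta_q f\|_{L^2} + \|[v\cdot\nabla,\Delta_q]u\|_{L^2} + \|\Delta_q(a(\Delta u-\nabla\Pi))\|_{L^2}\Bigr).
$$
Bernstein gives $\|\nabla\Delta_q u\|_{L^2}\gtrsim 2^q\|\Delta_q u\|_{L^2}$ for $q\geq 0$, so after dividing by $\|\Delta_q u\|_{L^2}$, integrating in time and multiplying by $2^{-q}$, the left-hand side controls both $\|u\|_{L^\infty_T(B^{-1}_{2,\infty})}$ and $\|u\|_{\widetilde L^1_T(B^{1}_{2,\infty})}$; the low-frequency block $\Delta_{-1}u$ is handled separately by a direct $L^2$ Gronwall.

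Next I would estimate the three source terms. The forcing contributes $\|f\|_{\widetilde L^1_T(B^{-1}_{2,\infty})}$ directly after multiplying by $2^{-q}$ and taking $\sup_q$. For the transport commutator, a Bony decomposition as in the proof of Lemma \ref{lemcommu}, now used at the endpoint $\alpha=-1$ with the stronger hypothesis $v\in B^1_{\infty,1}$, produces a bound of the form
$$
2^{-q}\|[v\cdot\nabla,\Delta_q]u\|_{L^2}\lesssim c_q(t)\,\|v(t)\|_{B^1_{\infty,1}}\bigl(\|u\|_{B^{-1}_{2,\infty}}+\|u\|_{B^1_{2,\infty}}\bigr),
$$
with $\sup_q c_q(t)\lesssim 1$, which is compatible with the Gronwall step.

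The variable-coefficient term is the delicate one. Writing $a(\Delta u-\nabla\Pi)=a\Delta u-a\nabla\Pi$, I would analyze each factor by Bony decomposition together with the $2$-D embedding $B^1_{2,1}\hookrightarrow L^\infty$. Paraproduct and remainder bounds give the clean estimate
$$
\|a\nabla\Pi\|_{B^{-1}_{2,\infty}}\lesssim \|a\|_{B^1_{2,1}}\,\|\nabla\Pi\|_{B^{-1}_{2,\infty}},
$$
whose time integral supplies the last term on the right-hand side of \eqref{estimate-uniqueness-1}. For $a\Delta u$ a naive paraproduct bound costs a full derivative on $u$, which is unaffordable in $B^{-1}_{2,\infty}$; using $a\in B^2_{2,1}$ one instead obtains $\|a\Delta u\|_{B^{-1}_{2,\infty}}\lesssim\|a\|_{B^2_{2,1}}\|u\|_{B^0_{2,\infty}}$, and one then interpolates $\|u\|_{B^0_{2,\infty}}\lesssim\|u\|_{B^{-1}_{2,\infty}}^{1/2}\|u\|_{B^1_{2,\infty}}^{1/2}$. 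A Young inequality splits the result as $\eta\|u\|_{B^1_{2,\infty}}+\eta^{-1}\|a\|_{B^2_{2,1}}^2\|u\|_{B^{-1}_{2,\infty}}$; choosing $\eta$ small allows the first piece to be absorbed into the dissipation on the left, and the second, after time integration, gives exactly the term $T\|a\|_{\widetilde L^\infty_T(B^2_{2,1})}^2\,\|u\|_{L^\infty_T(B^{-1}_{2,\infty})}$ that ends up inside the Gronwall kernel.

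Finally I would set $X(t)=\|u\|_{L^\infty_{[0,t]}(B^{-1}_{2,\infty})}+\|u\|_{\widetilde L^1_{[0,t]}(B^{1}_{2,\infty})}$ and assemble the above into
$$
X(t)\leq C\,\mathcal D + C\int_0^t\bigl(1+\|v(s)\|_{B^1_{\infty,1}}+\|a\|_{\widetilde L^\infty_s(B^2_{2,1})}^2\bigr)X(s)\,ds,
$$
where $\mathcal D$ denotes the bracketed data on the right of \eqref{estimate-uniqueness-1}, and conclude via the standard Gronwall inequality. The main obstacle, as flagged above, is the bound on $a\Delta u$: one cannot afford to lose a derivative on $u$ in $B^{-1}_{2,\infty}$, and the interpolation-plus-Young step that overcomes this loss is precisely what generates the quadratic dependence on $\|a\|_{B^2_{2,1}}$ inside the exponent; a secondary technical point is to justify the endpoint version $\alpha=-1$ of Lemma \ref{lemcommu}, which requires redoing the four-term decomposition with the stronger assumption $v\in B^1_{\infty,1}$ in place of $\nabla v\in L^\infty$.
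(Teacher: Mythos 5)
There is a genuine gap at the step you yourself flag as the delicate one. Your claimed bound $\|a\Delta u\|_{B^{-1}_{2,\infty}}\lesssim\|a\|_{B^2_{2,1}}\|u\|_{B^{0}_{2,\infty}}$ is false: in the Bony splitting $a\Delta u=T_a\Delta u+T_{\Delta u}a+\cR(a,\Delta u)$, the remainder and the paraproduct $T_{\Delta u}a$ can indeed be paid for by the extra regularity of $a$, but in $T_a\Delta u=\sum_q S_{q-1}a\,\Delta_q\Delta u$ both derivatives sit on the high-frequency factor $u$, so its $B^{-1}_{2,\infty}$ norm is genuinely of size $\|a\|_{L^\infty}\|u\|_{B^{1}_{2,\infty}}$ and no amount of smoothness of $a$ lowers the index on $u$. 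Since the proposition assumes no smallness of $a$ (only $1+a\geq c_1$), the resulting contribution $\|a\|_{L^\infty}\|u\|_{\widetilde L^1_T(B^1_{2,\infty})}$ cannot be absorbed into the left-hand side, and the interpolation-plus-Young device you propose never gets off the ground for this piece. This is precisely why the term cannot be treated perturbatively.

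The paper's proof avoids this by never peeling $a\Delta u$ off as a source: after applying $\Delta_q\mathbb{P}$ it rewrites the second-order term as $-\dive((1+a)\nabla\Delta_q u)$ on the left, uses $1+a\geq c_1$ together with Lemma A.5 of \cite{DAN} (which exploits the spectral localization of $\Delta_q u$) to get the coercivity $\int(1+a)|\nabla\Delta_q u|^2\,dx\gtrsim 2^{2q}\|\Delta_q u\|_{L^2}^2$, and pays only for the commutator $\dive[\Delta_q\mathbb{P},a]\nabla u$ plus $\nabla a\cdot\nabla u$ and the pressure paraproducts. Those error terms do have the good structure (the derivative is shared with $a$), cost only $\|a\|_{\widetilde L^\infty_T(B^2_{2,1})}\|u\|_{L^1_T(B^0_{2,1})}$, and are then closed by the interpolation $\|u\|_{L^1_T(B^{0}_{2,1})}\lesssim\|u\|_{\widetilde L^1_T(B^{-1}_{2,\infty})}^{1/2}\|u\|_{\widetilde L^1_T(B^{1}_{2,\infty})}^{1/2}$, Young's inequality and Gronwall — exactly the endgame you describe, but applied to the commutator rather than to the product $a\Delta u$. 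Your handling of $f$, of the transport commutator at the endpoint $\alpha=-1$, and of $a\nabla\Pi$ is consistent with the paper; the missing idea is keeping the full variable-coefficient elliptic operator on the left.
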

\begin{proof}
Applying  $\Delta_q\mathbb{P}$ to \eqref{model-unique-1}, then a standard
commutator process gives
\begin{equation}\label{model-1-unique-1}
\begin{split}
\partial_t\Delta_q u +(v\cdot\nabla)\Delta_qu &-{\mathop{\rm
div}}((1+a)\nabla\Delta_q u)\\
&= [v, \Delta_q\mathbb{P}]\cdot\grad u-\Delta_q\mathbb{P}(\nabla a\cdot\nabla u)
+\Delta_q\mathbb{P} (T_{\nabla a}\Pi)-\Delta_q\mathbb{P} T_{\grad \Pi} a\\
&-\Delta_q\mathbb{P} \mathcal{R}(\grad \Pi, a)+{\mathop{\rm
div}}\big[\Delta_q\mathbb{P}, a\big]\nabla u+\Delta_q\mathbb{P} f.
\end{split}
\end{equation}
Thanks to the fact that
$\dive u=\dive v=0$ and $1+a\geq c_1,$ we get by taking the $L^2$
inner product of \eqref{model-1-unique-1} with $\Delta_q u$ that
\begin{equation*}
\begin{split}
&\frac{d}{dt} \|\Delta_q u\|_{L^{2}}^2
-\int_{\R^2}{\mathop{\rm
div}}((1+a)\nabla\Delta_q u)\Delta_q udx
 \\&
 \lesssim
 \|\Delta_qu\|_{L^2}\Big(
 \|[v,\Delta_q\mathbb{P}]\cdot \grad u\|_{L^2}
+\|\Delta_q\mathbb{P}(\nabla a\cdot\nabla u)\|_{L^2}
+2^{q}\|\big[a,\Delta_q\mathbb{P}\big]\nabla u\|_{L^2}
\\&\qquad\qquad\qquad+\|\Delta_q\mathbb{P} (T_{\nabla a}\Pi)\|_{L^2}
+\|\Delta_q\mathbb{P} T_{\grad \Pi} a\|_{L^2}
+\|\Delta_q\mathbb{P}\mathcal{R}(\grad \Pi, a)\|_{L^2}
+\|\Delta_q\mathbb{P} f\|_{L^2}\Big).
\end{split}
\end{equation*}
We  get, by using integration by parts and  Lemma A.5 of
\cite{DAN}, that for $0\le q$
\beno \begin{split}
-\int_{\R^2}{\mathop{\rm
div}}((1+a)\nabla\Delta_q u)\Delta_q udx =&\int_{\R^2}(1+a)|\D_q\na
u|^2\,dx
\gtrsim
2^{2q}\|\Delta_qu\|_{L^2}^2,
\end{split}
\eeno
This leads to
\begin{equation}\label{model-unique-1a}
\begin{aligned}
&\|\Delta_q u\|_{L^\infty_t(L^2)}
+2^{2q}\|\Delta_qu\|_{L^1_t(L^2)}
\lesssim
\|\D_qu_0\|_{L^2}
+\|\Delta_{-1}u\|_{L^1_t(L^2)}
\\&
+\int_0^t\bigl(\|[v, \Delta_q\mathbb{P}]\cdot\grad u\|_{L^2}
+\|\Delta_q\mathbb{P}(\nabla a\cdot\nabla u)\|_{L^2}
+2^{q}\|\big[\Delta_q\mathbb{P},a\big]\nabla u\|_{L^2}\\
&\qquad
+\|\Delta_q\mathbb{P} (T_{\nabla a}\Pi)\|_{L^2}
+\|\Delta_q\mathbb{P} T_{\grad\Pi} a\|_{L^2}
+\|\Delta_q\mathbb{P} \mathcal{R}(\grad \Pi,a)\|_{L^2}
+\|\Delta_q\mathbb{P} f\|_{L^2} \bigr)\,dt',
\end{aligned}
\end{equation}
First as $\dv\,v=0,$ we deduce by
similar proof to inequality \eqref{a.4}
\begin{equation}\label{nonhomo-commutator-2}
\sup_{q\geq-1}2^{-q}\bigl\|[v,\Delta_q\mathbb{P}]\cdot\grad u\bigr\|_{L^1_T(L^2)}
\lesssim
\int_0^T\| v\|_{B^1_{\infty,1}}\|u\|_{B^{-1}_{2,\infty}}\,dt.
\end{equation}
Thanks to \eqref{bony}  in the inhomogeneous context, we write
\begin{equation*}
\big[a,\Delta_q\mathbb{P}\big]\nabla u=\Delta_q\mathbb{P} \,\mathcal{R}(a,\nabla
u)+\Delta_q\mathbb{P}\,T_{\nabla u}a-\,T'_{\Delta_q\nabla u}a-
[T_a,\Delta_q\mathbb{P}]\nabla u.
\end{equation*}
Whereas applying Lemma \ref{lem2.1} gives
\begin{equation}\label{model-1-unique-2}
\begin{split}
\|\Delta_q\mathbb{P} \mathcal{R}(a,\nabla u)\|_{L^1_T(L^2)}
\lesssim &
\sum_{k\geq q-3} \|\Delta_k a
\|_{L^\infty_T(L^\infty)}\|\widetilde{\Delta}_k\nabla
u\|_{L^1_T(L^2)} \\
 \lesssim &
\sum_{k\geq q-3}2^{k}
\|\Delta_ka\|_{L^\infty_T(L^\infty)}
\|\widetilde{\Delta}_k u\|_{L^1_T(L^2)}
\\
\lesssim &
\|a\|_{L^\infty_T(B^{1}_{\infty,\infty})}
\|u\|_{L^1_T(B^{0}_{2,1})}.
\end{split}
\end{equation}
The same estimate holds for $T'_{\Delta_q\nabla u}a.$
Note that
\[
 \|S_{k-1}\na u\|_{L^2}\lesssim
2^{k}
\sum_{\ell\le k-2}2^{(\ell-k)}
\|\Delta_{\ell}u\|_{L^2},
\]
this along with Lemma \ref{lem2.1} leads to
\begin{equation}\label{model-1-unique-3}
\begin{split}
 \|\Delta_q T_{\nabla u}a(t)\|_{L^1_T(L^2)} &
\lesssim
\sum_{| q-k|\leq 4}\|\Delta_k a\|_{L^\infty_T(L^\infty)}
\|S_{k-1} \grad u\|_{L^1_T(L^2)}
\\
&\lesssim
\|a\|_{L^\infty_T(B^{1}_{\infty,\infty})}
\|u\|_{\widetilde L^1_T(B^{0}_{2,\infty})}.
\end{split}
\end{equation}
Finally notice that
$$
\|\nabla S_{k-1}a\|_{L^\infty}
\lesssim
\|\nabla a\|_{L^\infty},
$$
one has
\begin{equation}\label{model-1-unique-5}
\begin{split}
\| [\Delta_q\mathbb{P},T_a]\nabla u\|_{L^1_T(L^2)} &
\lesssim \sum_{|k-q|\leq
4}2^{-q}\|\nabla S_{k-1}a\|_{L^\infty_T(L^\infty)}
\|\nabla\Delta_{k}u\|_{L^1_T(L^2)}
\\
\lesssim &
\|\nabla a\|_{L^\infty_T(L^\infty)}
\|u\|_{\widetilde L^1_T(B^0_{2,\infty})}.
\end{split}
\end{equation}
As a consequence, we obtain
\begin{equation}\label{model-1-unique-6}
\sup_{q\geq-1}\|\big[\Delta_q\mathbb{P}, a\big]\nabla u\|_{L^1_T(L^2)}
\lesssim
\bigl(\|\nabla a\|_{L^\infty_T(L^\infty)}+\|a\|_{L^\infty_T(B^1_{\infty,\infty})}\bigr)
\|u\|_{L^1_T(B^0_{2,1})}.
\end{equation}
On the other hand, it follows from \eqref{bony} and Lemma \ref{lem2.1}, that
\begin{equation}\label{model-1-unique-7}
\begin{aligned}
\|\nabla a \cdot\nabla u\|_{\widetilde L^1_T(B^{-1}_{2,\infty})}
&\lesssim
\|\nabla a\|_{L^\infty_T(L^\infty)}
\|u\|_{\widetilde L^1_T(B^{0}_{2,\infty})}
+\|a\|_{L^\infty_T(B^2_{2,\infty})}
\|u\|_{L^1_T(B^{0}_{2,1})}
\\&
\lesssim
\|a\|_{L^\infty_T(B^2_{2,1})}
\|u\|_{L^1_T(B^0_{2,1})}.
\end{aligned}
\end{equation}
Similar, we have
\begin{equation}\label{model-1-unique-8}
\begin{aligned}
\sup_{q\geq-1}2^{-q}\bigl(
\|\Delta_q\mathbb{P} T_{\grad \Pi}a\|_{L^1_T(L^2)}
&+\|\Delta_q\mathbb{P}\mathcal{R}(a, \grad
\Pi)\|_{L^1_T(L^2)}\bigr)
\\&
\lesssim
\sup_{q\geq-1}\bigl(
\|\Delta_q\mathbb{P} T_{\grad \Pi}a\|_{L^1_T(L^{1})}
+\|\Delta_q\mathbb{P}\mathcal{R}(a, \grad
\Pi)\|_{L^1_T(L^{1})}\bigr)
\\&
\lesssim
\|a\|_{\widetilde L^\infty_T(B^{1}_{2,1})}
\|\nabla\Pi\|_{\widetilde L^1_T(B^{-1}_{2,\infty})}.
\end{aligned}
\end{equation}
And a similar argument gives the same estimate for
$\Delta_q\mathbb{P}T_{\grad a}\Pi.$

Plugging (\ref{nonhomo-commutator-2} - \ref{model-1-unique-8}) and
 into \eqref{model-unique-1a}, we arrive
at
\begin{equation*}\label{estimate-uniqueness-1-b}
\begin{split}
\| u\|_{L^\infty_T(B^{-1}_{2,\infty})} &
+\|u\|_{\widetilde L^1_T(B^{1}_{2, \infty})}
\leq
C\Bigl\{\|u_0\|_{B^{-1}_{2,\infty}}
+\int_{0}^T \| u(t)\|_{B^{-1}_{2,\infty}}
\bigl(1+\|v(t)\|_{B^1_{\infty,1}}\bigr)dt
\\&
+\|a\|_{\widetilde L^\infty_T(B^2_{2,1})}
\|u\|_{L^1_T(B^{0}_{2,1})}
+\|a\|_{\widetilde L^\infty_T(B^{1}_{2,1})}
\|\nabla\Pi\|_{\widetilde L^1_T(B^{-1}_{2,\infty})}
+\|f\|_{\widetilde L^1_T(B^{-1}_{2,\infty})}\Bigr\},
\end{split}
\end{equation*}
or by interpolation, we have
$$
\|u\|_{L^1_T(B^{0}_{2,1})}
\lesssim
\|u\|_{\widetilde L^1_T(B^{-1}_{2, \infty})}^{\f12}
\|u\|_{\widetilde L^1_T(B^{1}_{2, \infty})}^{\f12}.
$$
Then by  Young inequality Gronwall Lemma, we deduce
$$
\begin{aligned}
\| u\|_{L^\infty_T(B^{-1}_{2,\infty})}
+\|u\|_{\widetilde L^1_T(B^{1}_{2, \infty})}
\leq
Ce^{C\bigl(T+T\|a\|_{\widetilde L^\infty_T(B^2_{2,1})}^2+\|v\|_{L^1_T(B^1_{\infty,1})}\bigr)}
&\Bigl\{\|u_0\|_{B^{-1}_{2,\infty}}
+\|f\|_{\widetilde L^1_T(B^{-1}_{2,\infty})}
\\&
+\|a\|_{\widetilde L^\infty_T(B^{1}_{2,1})}
\|\nabla\Pi\|_{\widetilde L^1_T(B^{-1}_{2,\infty})}\Bigr\}.
\end{aligned}
$$
which completes the proof of this Proposition.
\end{proof}
\begin{prop}\label{prop-H-negtive}
\par Let $a\in B^{1}_{2,1}(\R^2)$ such that $ 0<{\underline{b}}\leq
1+a\leq {\bar{b}}$, and
\begin{equation}\label{est-unique-1}
\|a-S_k a\|_{B^{1}_{2,1}}\leq c
\end{equation}
for some sufficiently small positive constant $c$ and some integer
$k \in \N.$  Let $F \in B^{-1}_{2,\infty}(\R^2)$  and $\grad \Pi \eqdefa
\mathcal{H}_{b}({F})\in B^{-1}_{2,\infty}(\R^2)$ solves
\begin{equation}\label{unique-per-model}
\dive\,((1+a) \grad \Pi) =\dive\,F.
\end{equation}
Then there holds
\begin{equation}\label{estimate-unique-per-model}
\|\grad \Pi\|_{B^{-1}_{2,\infty}}
 \lesssim
(1+2^{k}\|a\|_{B^{1}_{2,1}}^2)
 \bigl(\| F\|_{B^{-2}_{2,\infty}} +\|\dv\, F\|_{B^{-2}_{2,\infty}}\bigr).
\end{equation}
\end{prop}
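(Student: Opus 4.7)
My plan is to interpret \eqref{unique-per-model} as a perturbation of the constant-coefficient Laplace equation, split the coefficient $a$ according to its low and high Littlewood--Paley frequencies so as to exploit the smallness hypothesis \eqref{est-unique-1}, and close a bootstrap estimate for $\|\nabla\Pi\|_{B^{-1}_{2,\infty}}$.

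\textbf{Step 1 (Leray-projector bound).} I first rewrite \eqref{unique-per-model} as $\Delta\Pi=\dive(F-a\nabla\Pi)$, so that $\nabla\Pi=\nabla\Delta^{-1}\dive(F-a\nabla\Pi)$. Setting $\mathcal{Q}\eqdefa\nabla\Delta^{-1}\dive$, I establish the auxiliary bound
$$
\|\mathcal{Q}G\|_{B^{-1}_{2,\infty}}\lesssim \|G\|_{B^{-2}_{2,\infty}}+\|\dive G\|_{B^{-2}_{2,\infty}}
$$
for any vector field $G$: the block $\Delta_q$ with $q\geq 0$ obeys $\|\Delta_q\mathcal{Q}G\|_{L^2}\lesssim 2^{-q}\|\Delta_q\dive G\|_{L^2}$, while the block $\Delta_{-1}$ is handled via the $L^2$-boundedness of $\mathcal{Q}$ applied to $\Delta_{-1}G$. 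This already explains why both $\|F\|_{B^{-2}_{2,\infty}}$ and $\|\dv F\|_{B^{-2}_{2,\infty}}$ appear on the right-hand side of \eqref{estimate-unique-per-model}.

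\textbf{Step 2 (Splitting of $a$).} I decompose $a=S_ka+\tilde a$ with $\tilde a\eqdefa a-S_ka$. The hypothesis \eqref{est-unique-1} gives $\|\tilde a\|_{B^1_{2,1}}\leq c$, Bernstein yields $\|S_ka\|_{B^2_{2,1}}\lesssim 2^k\|a\|_{B^1_{2,1}}$, and the two-dimensional Sobolev embedding $B^1_{2,1}(\R^2)\hookrightarrow L^\infty$ gives $\|S_ka\|_{L^\infty}+\|\tilde a\|_{L^\infty}\lesssim\|a\|_{B^1_{2,1}}$. Combined with Step 1 applied to $G=F-a\nabla\Pi$, this produces
$$
\|\nabla\Pi\|_{B^{-1}_{2,\infty}}\leq C\bigl(\|F\|_{B^{-2}_{2,\infty}}+\|\dv F\|_{B^{-2}_{2,\infty}}\bigr)+C\bigl(\|\tilde a\,\nabla\Pi\|_{B^{-1}_{2,\infty}}+\|S_ka\,\nabla\Pi\|_{B^{-1}_{2,\infty}}\bigr).
$$

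\textbf{Step 3 (Borderline product via Bony).} For any $b\in B^1_{2,1}$ I use the Bony decomposition $b\,\nabla\Pi=T_b\nabla\Pi+T_{\nabla\Pi}b+R(b,\nabla\Pi)$. The paraproduct $T_b\nabla\Pi$ is dominated by $\|b\|_{L^\infty}\|\nabla\Pi\|_{B^{-1}_{2,\infty}}$. The reversed paraproduct $T_{\nabla\Pi}b$ and the remainder $R(b,\nabla\Pi)$ live at the critical total regularity $(-1)+1=0$; the two-dimensional Bernstein inequality $\|\Delta_jb\|_{L^\infty}\lesssim 2^j\|\Delta_jb\|_{L^2}$, combined with the $\ell^1$-summability of $\{2^j\|\Delta_jb\|_{L^2}\}_j$, compensates for the $\ell^\infty$-only control of $\nabla\Pi$'s blocks and delivers
$$
\|b\,\nabla\Pi\|_{B^{-1}_{2,\infty}}\lesssim \|b\|_{B^1_{2,1}}\|\nabla\Pi\|_{B^{-1}_{2,\infty}}.
$$
Taking $b=\tilde a$ and requiring $c$ sufficiently small allows the $\|\tilde a\,\nabla\Pi\|_{B^{-1}_{2,\infty}}$ term to be absorbed on the left-hand side.

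\textbf{Step 4 (Iteration for the smooth part).} The remaining $\|S_ka\,\nabla\Pi\|_{B^{-1}_{2,\infty}}\lesssim\|a\|_{B^1_{2,1}}\|\nabla\Pi\|_{B^{-1}_{2,\infty}}$ is not small and must be treated by iterating the estimate: one substitutes the preliminary linear bound back into itself, trading the extra $B^2_{2,1}$-regularity of $S_ka$ (whose norm is at most $2^k\|a\|_{B^1_{2,1}}$) against the data norms $\|F\|_{B^{-2}_{2,\infty}}+\|\dv F\|_{B^{-2}_{2,\infty}}$. The second pass produces the quadratic factor $2^k\|a\|_{B^1_{2,1}}^2$ announced in \eqref{estimate-unique-per-model}.

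\textbf{Main obstacle.} The crux is the borderline product $a\cdot\nabla\Pi$ at total regularity zero, particularly the remainder $R(a,\nabla\Pi)$, which has no slack in either Besov exponent. Only the combination of the 2D Bernstein gain, the $\ell^1$-versus-$\ell^\infty$ asymmetry between $a$'s and $\nabla\Pi$'s dyadic blocks, and the smallness of $\tilde a$ from \eqref{est-unique-1} make the estimate close; the factor $2^k\|a\|_{B^1_{2,1}}^2$ reflects precisely the cost of moving the smooth part $S_ka$ into $B^2_{2,1}$ and iterating once.
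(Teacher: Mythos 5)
Your Steps 1--3 are essentially sound: the product law $\|b\,\nabla\Pi\|_{B^{-1}_{2,\infty}}\lesssim\|b\|_{B^1_{2,1}}\|\nabla\Pi\|_{B^{-1}_{2,\infty}}$ does hold in two dimensions, although for the remainder $R(b,\nabla\Pi)$ you must estimate the dyadic blocks in $L^1$ and apply Bernstein to the output block (i.e.\ use $B^0_{1,\infty}\hookrightarrow B^{-1}_{2,\infty}$); the route you sketch, putting $\Delta_k b$ in $L^\infty$, produces $\sum_{k\geq q-3}2^{2k}\|\Delta_kb\|_{L^2}$, which is not $O(2^q)$. The genuine gap is Step 4. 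After absorbing the $\tilde a$-term you are left with
$$
\|\nabla\Pi\|_{B^{-1}_{2,\infty}}\leq C\bigl(\|F\|_{B^{-2}_{2,\infty}}+\|\dv F\|_{B^{-2}_{2,\infty}}\bigr)+C\|a\|_{B^1_{2,1}}\|\nabla\Pi\|_{B^{-1}_{2,\infty}},
$$
and since $\|S_ka\|_{B^1_{2,1}}\leq\|a\|_{B^1_{2,1}}$ carries no smallness, substituting this bound into itself merely reproduces the same non-absorbable term (the formal Neumann series has ratio $C\|a\|_{B^1_{2,1}}$ and diverges). Nor can the extra $B^2_{2,1}$-regularity of $S_ka$ be ``traded against the data norms'': multiplication by a smoother function does not gain derivatives on $\nabla\Pi$ (the paraproduct $T_{S_ka}\nabla\Pi$ is exactly as rough as $\nabla\Pi$), so there is nothing to iterate against. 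The tell-tale symptom is that your argument never invokes the ellipticity $1+a\geq{\underline{b}}>0$, only the smallness of $a-S_ka$; an estimate of this type cannot hold without that lower bound, so no purely perturbative argument around $\Delta$ can close.

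The paper's proof circumvents this by duality. It first proves the positive-regularity estimate $\|\nabla\Pi\|_{B^{1}_{2,1}}\lesssim(1+2^{k}\|a\|_{B^{1}_{2,1}}^2)\|F\|_{B^{1}_{2,1}}$, where ellipticity enters through the localized coercivity $\int(1+S_ka)|\nabla\Delta_q\Pi|^2\,dx\gtrsim 2^{2q}\|\Delta_q\Pi\|_{L^2}^2$ (Lemma A.5 of \cite{DAN}) together with the classical $L^2$ bound $\|\nabla\Pi\|_{L^2}\lesssim\|F\|_{L^2}$; the low frequencies of $a$ are handled there by a commutator plus interpolation down to $L^2$, which is precisely what produces the factor $2^{k}\|a\|_{B^{1}_{2,1}}^2$. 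Then, using the self-adjointness of $\dive((1+a)\grad\cdot)$, one writes $\|\grad\Pi\|_{B^{-1}_{2,\infty}}=\sup_{\|g\|_{B^{1}_{2,1}}\leq1}\langle g,\grad\Pi\rangle$, solves the dual problem $\dive((1+a)\grad h_g)=\dive g$ with the positive-regularity estimate, and integrates by parts to land on $-\langle\dive F,h_g\rangle$; splitting $h_g$ into its $\Delta_{-1}$ block and the rest yields exactly the right-hand side of \eqref{estimate-unique-per-model}. You would need to replace your Step 4 by this duality step, or by some other mechanism that brings the lower bound on $1+a$ into play at negative regularity.
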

\begin{proof}
We first deduce from \eqref{est-unique-1} and ${\underline{b}}\leq
1+a$ that
\begin{equation}\label{lower-bdd-1}
1+S_k a=1+a+(S_k a-a) \geq{{\underline{b}}\over{2}}.
\end{equation}

Motivated by \cite{A-G-Z-2, danchin-3}, we shall use a duality argument to
prove \eqref{estimate-unique-per-model}. For the sake of simplicity,
we just prove \eqref{estimate-unique-per-model} for sufficiently
smooth function $F$. In order to make the following computation
rigorous, one has to use a density argument, which we omit here.

For this, we first estimate $\|\grad \Pi\|_{B^{1}_{2,1}}$ under the
assumption that $F \in B^{1}_{2,1}(\R^2)$. Indeed, we write
\begin{equation*}
\dv[(1+S_k a)\nabla\Pi]=\dv\,F+\dv\,[(S_k a-a)\nabla\Pi],
\end{equation*}
applying  $\Delta_q$ to the above equation gives
\begin{equation*}
\dv\,[(1+S_k a)\Delta_q\nabla\Pi]=\dv\,\Delta_q F+\dv\,\Delta_q
[(S_k a-a)\nabla\Pi]+\dv\,([S_k a,\Delta_q]\nabla\Pi).
\end{equation*}
Taking the $L^{2}$ inner product of this equation with $\Delta_q\Pi,$
we obtain by a similar estimated of inequality \eqref{model-1-unique-6}
\begin{equation*}\label{estimate-unique-per-model-1}
\begin{split}
 \|\grad \Pi\|_{B^{1}_{2,1}}
 &\lesssim
 \|(S_k a-a)\nabla\Pi\|_{B^{1}_{2,1}}
+\|F\|_{B^{1}_{2,1}}
+\sum_{q\geq-1}2^{q}\bigl\|[S_k a,\Delta_q]\nabla\Pi\bigr\|_{L^{2}}
 \\
&\lesssim
\|S_k a-a\|_{B^{1}_{2,1}}
\|\nabla\Pi\|_{B^{1}_{2,1}}
+\|F\|_{B^{1}_{2,1}}
+\|S_ka\|_{B^{\f12}_{\infty,1}}
\|\nabla\Pi\|_{B^{\f12}_{2,1}},
\end{split}
\end{equation*}
and by classical elliptic estimate, we have
$$
\|\nabla\Pi\|_{L^2}
\lesssim
\|F\|_{L^2}.
$$
This along with \eqref{est-unique-1} and interpolation, leads
to
\begin{equation}\label{estimate-unique-per-model-2}
\|\grad \Pi\|_{B^{1}_{2,1}}
\lesssim
\bigl(1+2^{k}\|a\|_{B^{1}_{2,1}}^2\bigr)
\|F\|_{B^{1}_{2,1}}.
\end{equation}
Now we use a duality argument (see Corollary 6.2.8 \cite{BE}) to estimate
$\|\grad\Pi\|_{B^{-1}_{2,\infty}}$ in
the case when $F\in B^{-1}_{2,\infty}(\R^2).$ Notice that
\begin{equation}\label{estimate-unique-per-model-3}
\|\grad \Pi\|_{ B^{-1}_{2,\infty}}
=\sup_{\|g\|_{ B^{1}_{2,1}} \leq 1}\bigl\langle g ,
\grad \Pi \bigr\rangle =\sup_{\|g\|_{B^{1}_{2,1}} \leq 1}(-\int \Pi\, \dv\, g
\,dx),
\end{equation}
where $\bigl\langle g , \ \grad \Pi \bigr\rangle$ denotes the
duality bracket between $\cS'(\R^2)$ and $\cS(\R^2).$  Whereas
\eqref{estimate-unique-per-model-2} ensures that for any $g\in
B^{1}_{2,1}(\R^2)$
\begin{equation*}
\dv\, ((1+a) \grad h_g)=\dv\, g
\end{equation*}
has a unique solution $\grad h_{g} \in B^{1}_{2,1}(\R^2)$ satisfying
\begin{equation}\label{estimate-unique-per-model-4}
\|\grad h_g\|_{B^1_{2,1}}
\lesssim
(1+2^{k}\|a\|_{B^{1}_{2,1}}^2)
\|g\|_{B^{1}_{2,1}},
\end{equation}
which along with \eqref{estimate-unique-per-model-3}  yields
\begin{equation*}\label{estimate-unique-per-model-5}
\begin{split}
 \|\grad \Pi\|_{B^{-1}_{2,\infty}} &=
 \sup_{\|g\|_{B^{1}_{2,1}} \leq 1}-\bigl\langle \Pi, \dv\,
((1+a) \grad h_g) \bigr\rangle=\sup_{\|g\|_{B^{1}_{2,1}} \leq 1}\bigl\langle
(1+a)\grad \Pi,
 \grad h_g \bigr\rangle\\
  &=\sup_{\|g\|_{B^{1}_{2,1}} \leq 1}-\bigl\langle h_g, \dv\,
((1+a) \grad \Pi) \bigr\rangle=\sup_{\|g\|_{B^{1}_{2,1}} \leq
1}-\bigl\langle \dv\, F,
 h_g \bigr\rangle
 \\&
 =\sup_{\|g\|_{B^{1}_{2,1}} \leq1}
- \bigl\langle \dv\, F, \sum_{\ell\geq-1}\Delta_{\ell}h_g \bigr\rangle.
\end{split}
\end{equation*}
Hence, it follows that
\begin{equation}\label{estimate-unique-per-model-5}
\begin{split}
 \|\grad \Pi\|_{B^{-1}_{2,\infty}}&
=\sup_{\|g\|_{B^{1}_{2,1}} \leq1}
- \bigl\langle \dv\, F, \Delta_{-1}h_g \bigr\rangle
+\sup_{\|g\|_{B^{1}_{2,1}} \leq1}
- \bigl\langle \dv\, F, \sum_{\ell\geq0}\Delta_{\ell}h_g \bigr\rangle
 \\&
 =\sup_{\|g\|_{B^{1}_{2,1}} \leq 1}\bigl\langle F,
 \grad\Delta_{-1} h_g \bigr\rangle
 +\sup_{\|g\|_{B^{1}_{2,1}} \leq1}
- \bigl\langle \dv\, F, \sum_{\ell\geq0}\Delta_{\ell}h_g \bigr\rangle.
\end{split}
\end{equation}
Whence thanks to \eqref{estimate-unique-per-model-4}, we obtain
\begin{equation*}\label{estimate-unique-per-model-6}
\begin{aligned}
 \|\grad \Pi\|_{B^{-1}_{2,\infty}}
 &\lesssim
 \sup_{\|g\|_{B^{1}_{2,1}} \leq 1}\| F\|_{B^{-2}_{2,\infty}} \|
 \grad\Delta_{-1} h_g\|_{B^{2}_{2,1}}
 +\sup_{\|g\|_{B^{1}_{2,1}} \leq 1}\|\dv\, F\|_{B^{-2}_{2,\infty}}
 \|\sum_{\ell\geq0}\Delta_{\ell} h_g\|_{B^{2}_{2,1}}
 \\&
 \lesssim
 \sup_{\|g\|_{B^{1}_{2,1}} \leq 1}\| F\|_{B^{-2}_{2,\infty}}
 \|\grad h_g\|_{B^{1}_{2,1}}
 +\sup_{\|g\|_{B^{1}_{2,1}} \leq 1}\|\dv\, F\|_{B^{-2}_{2,\infty}}
 \|\grad h_g\|_{B^{1}_{2,1}}
\\&
 \lesssim
 (1+2^{k}\|a\|_{B^{1}_{2,1}}^2)
 \bigl(\| F\|_{B^{-2}_{2,\infty}} +\|\dv\, F\|_{B^{-2}_{2,\infty}}\bigr),
\end{aligned}
\end{equation*}
which completes the proof of this proposition.
\end{proof}

In order to get the uniqueness of the solution in the critical case in Theorem \ref{thm1.1}, we need to recall the following Osgood's lemma \cite{fleet}.
\begin{lem}[\cite{fleet}, Osgood's lemma]\label{lem-Osgood}
Let $f \geq 0$ be a measurable function,
 $\gamma$ be a locally integrable function and $\mu$ be a positive,
continuous and nondecreasing function which verifies the following condition
\begin{equation*}
\int_0^1\frac{dr}{\mu(r)}=+\infty.
\end{equation*}
Let also $a$ be a positive real number and let $f$ satisfy the inequality
\begin{equation*}
f(t) \leq a+\int_0^t\gamma(s) \mu(f(s))\, ds.
\end{equation*}
Then if $a$ is equal to zero, the function $f$ vanishes.
If $a$ is not zero, then we have
\begin{equation*}
-\mathcal{M}(f(t))+\mathcal{M}(a)\leq \int_0^t\gamma(s) \, ds\quad
\mbox{with}\quad
\mathcal{M}(x)=\int_x^1\frac{dr}{\mu(r)}.
\end{equation*}
\end{lem}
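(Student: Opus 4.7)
The plan is to reduce the integral inequality to a differential inequality for a suitable majorant, handle the case $a>0$ by direct integration against $\cM$, and recover $a=0$ by a limiting argument that relies crucially on the divergence $\int_0^1 dr/\mu(r)=+\infty$.

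First, for $a>0$, I would introduce the majorant
$$
\varphi(t)\eqdefa a+\int_0^t\gamma(s)\mu(f(s))\,ds,
$$
which is absolutely continuous (since $\gamma\mu(f)$ is locally integrable --- a point that needs some care, see below) and dominates $f$ pointwise. Monotonicity of $\mu$ gives $\mu(f(t))\le\mu(\varphi(t))$ a.e., so that $\varphi'(t)\le\gamma(t)\mu(\varphi(t))$. Since $\varphi\ge a>0$, one may divide by $\mu(\varphi)$ and integrate; using $\cM'(x)=-1/\mu(x)$, this produces $\cM(a)-\cM(\varphi(t))\le\int_0^t\gamma(s)\,ds$. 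Monotonicity of $\cM$ together with $f\le\varphi$ then gives the desired $\cM(a)-\cM(f(t))\le\int_0^t\gamma(s)\,ds$.

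For $a=0$, the idea is to note that $f$ satisfies the same hypothesis with $a$ replaced by any $\varepsilon>0$, apply the previous step to get $\cM(\varepsilon)-\cM(f(t))\le\int_0^t\gamma(s)\,ds$ for every $\varepsilon>0$, and then let $\varepsilon\downarrow 0$. The divergence condition is precisely what makes $\cM(\varepsilon)\to+\infty$; combined with the finiteness of the right-hand side on any compact time interval, this forces $f\equiv 0$.

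The main technical obstacle lies in the differentiation step: when $f$ is only measurable and $\gamma$ only locally integrable, one has to justify rigorously that $\varphi$ is absolutely continuous with $\varphi'=\gamma\mu(f)$ a.e., and that $\cM\circ\varphi$ is absolutely continuous with the chain rule $(\cM\circ\varphi)'=\cM'(\varphi)\varphi'$ holding almost everywhere. This relies on $\cM$ being $C^1$ wherever $\mu>0$ and on the strict positivity $\varphi\ge a>0$, which is also why the case $a=0$ cannot be handled directly and requires the separate limiting argument.
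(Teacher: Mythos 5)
The paper does not actually prove this lemma: it is quoted verbatim from \cite{fleet} (see also Lemma 3.4 of \cite{BCD}) and used as a black box, so there is no internal proof to compare against. Your argument is the standard textbook proof and is correct as written; the two points you should make explicit are (i) that $\gamma$ must be nonnegative (as it is in the applications here) for the step $\gamma(t)\mu(f(t))\le\gamma(t)\mu(\varphi(t))$ to preserve the inequality, and (ii) that the chain rule for $\mathcal{M}\circ\varphi$ is justified because $\mathcal{M}$ is locally Lipschitz on $[a,+\infty)$, since $\mu$ nondecreasing and positive gives $1/\mu\le 1/\mu(a)$ there, and the composition of a locally Lipschitz function with the absolutely continuous, nondecreasing $\varphi$ is again absolutely continuous.
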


\renewcommand{\theequation}{\thesection.\arabic{equation}}
\setcounter{equation}{0}

\section{The $L^1([0,T];\dot B^2_{2,1})$ estimate for the  velocity
field} \label{sect3}

In this section, we want to get, at least in the small time interval, the $L^1([0,T];\dot B^2_{2,1})$ estimate for the velocity field, which plays a crucial role in the study of the uniqueness of the solution to \eqref{1.1general}. For this, we first investigate some {\it a priori} estimates about the basic energy and the pressure.
\begin{prop}\label{Prop1}
\par Let $\varepsilon\in(0,1),$ $a_0:=\frac{1}{\rho_0}-1\in\dot B^0_{\infty,1}\cap\dot B^{\varepsilon}_{\frac{2}{\varepsilon},\infty}(\R^2)$ and $u_0\in L^2(\R^2)$, and \eqref{thma.1} holds. Let $(\rho,u,\na\Pi)$  be a  smooth enough solution of \eqref{1.1general} on $[0,T^\ast[,$ then for any $t \in ]0, T^{\ast}[$, there hold that
\begin{equation}\label{p.0}
\|\sqrt{\rho}u\|_{L^\infty_t(L^2)}+2\|\nabla u\|_{L^2_t(L^2)}
\le
\|\sqrt{\rho_0}u_0\|_{L^2},
\end{equation}
and
\begin{equation}\label{Pi-L2-0}
\begin{split}
\|\nabla\Pi\|_{L^1_t(L^2)}
\leq &
\Bigl(\eta+M\sum_{q\geq k}\|\dot\Delta_q a_0\|_{L^\infty}+M\|a_0\|_{\dot
B^{0}_{\infty,1}}\{e^{C\|\nabla u\|_{L^1_t(L^\infty)}}-1\}\Bigr)\|\Delta u\|_{L^1_t(L^2)}
\\&+C_{\eta}\,\bigg(\sqrt{t}2^k+\sqrt{t}2^{k}e^{C\|\nabla u\|_{L^1_t(L^\infty)}}
+\|\nabla u\|_{L^2_t(L^2)}^2\Bigr)
\end{split}
\end{equation}
for any positive constant $\eta$.
\end{prop}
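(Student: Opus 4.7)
The plan is to handle the two inequalities in turn. The energy identity \eqref{p.0} is the classical one for inhomogeneous Navier--Stokes: I would take the $L^2$ inner product of the momentum equation in \eqref{1.1} with $u$. The pressure contribution $\int\nabla\Pi\cdot u$ vanishes because $\dv u=0$, the continuity equation together with $\dv u=0$ rewrites the two inertial terms as $\tfrac12\tfrac{d}{dt}\|\sqrt\rho u\|_{L^2}^2$, and an integration by parts converts $\int(-\Delta u)\cdot u$ into $\|\nabla u\|_{L^2}^2$. Integrating in time yields $\|\sqrt\rho u(t)\|_{L^2}^2+2\|\nabla u\|_{L^2_t(L^2)}^2=\|\sqrt{\rho_0}u_0\|_{L^2}^2$, from which \eqref{p.0} follows.

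For the pressure estimate \eqref{Pi-L2-0}, I would rewrite the momentum equation as $\partial_t u+(u\cdot\nabla)u+(1+a)(\nabla\Pi-\Delta u)=0$ with $a=1/\rho-1$, and test it in $L^2$ against $\nabla\Pi$. Two integrations by parts, combined with $\dv u=0$, kill both $\int\partial_t u\cdot\nabla\Pi$ and $\int\Delta u\cdot\nabla\Pi$, leaving the central identity
\begin{equation*}
\int(1+a)\,|\nabla\Pi|^2\,dx=\int a\,\Delta u\cdot\nabla\Pi\,dx-\int(u\cdot\nabla)u\cdot\nabla\Pi\,dx.
\end{equation*}
Using $1+a=1/\rho\ge1/M$, dividing by $\|\nabla\Pi\|_{L^2}$ and integrating in time reduces the task to controlling the two right-hand terms.

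The convection term is treated by the $2$D Ladyzhenskaya/Gagliardo--Nirenberg interpolation $\|(u\cdot\nabla)u\|_{L^2}\lesssim\|u\|_{L^2}^{1/2}\|\nabla u\|_{L^2}\|\Delta u\|_{L^2}^{1/2}$: time integration, Cauchy--Schwarz and Young's inequality with parameter $\eta$ split it as $\eta\|\Delta u\|_{L^1_t(L^2)}+C_\eta\|u\|_{L^\infty_t(L^2)}\|\nabla u\|_{L^2_t(L^2)}^2$, and \eqref{p.0} absorbs the low-order factor into a constant, producing the $\eta\|\Delta u\|_{L^1_t(L^2)}$ and $C_\eta\|\nabla u\|_{L^2_t(L^2)}^2$ pieces of \eqref{Pi-L2-0}. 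For the $a\Delta u$ term I would split $a=\dot S_k a+(a-\dot S_k a)$. The high-frequency piece contributes $\|a-\dot S_k a\|_{L^\infty_t(L^\infty)}\|\Delta u\|_{L^1_t(L^2)}$, and Proposition~\ref{small-density-1} applied with $(\alpha,p,r)=(0,\infty,1)$, together with the embedding $\dot B^0_{\infty,1}\hookrightarrow L^\infty$, bounds the first factor by $\sum_{q\ge k}\|\dot\Delta_q a_0\|_{L^\infty}+\|a_0\|_{\dot B^0_{\infty,1}}(e^{CU(t)}-1)$, yielding the corresponding terms in the coefficient of $\|\Delta u\|_{L^1_t(L^2)}$. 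For the low-frequency piece $\int\dot S_k a\,\Delta u\cdot\nabla\Pi$ the key trick is to integrate by parts twice, using $\partial_i\partial_j u^i=0$ (from $\dv u=0$) to kill the resulting second-derivative-of-$\Pi$ term, arriving at the bound
\begin{equation*}
\Bigl|\int\dot S_k a\,\Delta u\cdot\nabla\Pi\,dx\Bigr|\le C\,\|\nabla\dot S_k a\|_{L^\infty}\,\|\nabla u\|_{L^2}\,\|\nabla\Pi\|_{L^2}.
\end{equation*}
Bernstein gives $\|\nabla\dot S_k a\|_{L^\infty}\le C\,2^k\|\dot S_k a\|_{L^\infty}$, and majorizing $\|\dot S_k a\|_{L^\infty}$ by $\|a_0\|_{L^\infty}$ (invariance under the divergence-free transport of $a$) and by $\|a\|_{\dot B^0_{\infty,1}}\le C\|a_0\|_{\dot B^0_{\infty,1}}e^{CU(t)}$ (Proposition~\ref{small-density-1}) yields, after a time integration with Cauchy--Schwarz and the bound $\|\nabla u\|_{L^1_t(L^2)}\le\sqrt t\,\|\nabla u\|_{L^2_t(L^2)}\le C\sqrt t$ from \eqref{p.0}, the two contributions $\sqrt t\,2^k$ and $\sqrt t\,2^k e^{CU(t)}$ that complete \eqref{Pi-L2-0}.

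The main obstacle lies in the integration by parts for $\int\dot S_k a\,\Delta u\cdot\nabla\Pi$: a naive H\"older bound $\|\dot S_k a\|_{L^\infty}\|\Delta u\|_{L^2}\|\nabla\Pi\|_{L^2}$ would contribute $\|a_0\|_{L^\infty}\|\Delta u\|_{L^1_t(L^2)}$ to the coefficient of $\|\Delta u\|_{L^1_t(L^2)}$, and since $\|a_0\|_{L^\infty}$ is not small this cannot be absorbed into $\eta$. Trading one derivative of $\dot S_k a$ for a Bernstein factor $2^k$ (and exploiting $\dv u=0$ to keep it at \emph{one} derivative rather than two) replaces $\|\Delta u\|_{L^2}$ by $\|\nabla u\|_{L^2}$, whose $L^1_t$-norm is controlled by $\sqrt t$ via \eqref{p.0}; this is precisely what provides the short-time smallness encoded in $C_\eta(\sqrt t\,2^k+\sqrt t\,2^k e^{CU(t)})$.
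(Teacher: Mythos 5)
Your argument is correct and follows the paper's strategy for everything except one step: the energy identity, the reduction to
$\int(1+a)|\nabla\Pi|^2\,dx=\int a\,\Delta u\cdot\nabla\Pi\,dx-\int(u\cdot\nabla)u\cdot\nabla\Pi\,dx$
(the paper gets it by applying $\dv$ and testing with $\Pi$, you by testing the momentum equation with $\nabla\Pi$ --- the same computation), the Gagliardo--Nirenberg/Young treatment of the convection term, and the high-frequency split with Proposition~\ref{small-density-1} at $(\alpha,p,r)=(0,\infty,1)$ all coincide with the paper. Where you genuinely diverge is the low-frequency term $\int \dot S_ka\,\Delta u\cdot\nabla\Pi\,dx$. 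The paper writes $\dv(\dot S_ka\,\Delta u)$ via Bony's decomposition, estimates the two paraproducts in $\dot H^{-1}$ (giving $\|\nabla\dot S_ka\|_{L^\infty}\|\nabla u\|_{L^2}\lesssim 2^k\|a_0\|_{L^\infty}\|\nabla u\|_{L^2}$, hence $\sqrt t\,2^k$) and the remainder through the product law $\|\cR(\dot S_ka,\Delta u)\|_{L^2}\lesssim 2^k\|a\|_{\dot B^{\varepsilon}_{2/\varepsilon,\infty}}\|\nabla u\|_{L^2}$, which is the sole place where the hypothesis $a_0\in\dot B^{\varepsilon}_{2/\varepsilon,\infty}$ and the transported bound $\|a\|_{\dot B^{\varepsilon}_{2/\varepsilon,\infty}}\le\|a_0\|_{\dot B^{\varepsilon}_{2/\varepsilon,\infty}}e^{CU(t)}$ enter --- this is the origin of the $\sqrt t\,2^k e^{CU(t)}$ term in \eqref{Pi-L2-0}. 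Your double integration by parts, in which the term $\int\dot S_ka\,\partial_iu_j\,\partial_i\partial_j\Pi\,dx$ is integrated by parts once more so that $\partial_j\partial_iu_j=\partial_i(\dv u)=0$ kills the second derivatives of $\Pi$, is a correct and more elementary substitute: it bounds the whole low-frequency term by $\|\nabla\dot S_ka\|_{L^\infty}\|\nabla u\|_{L^2}\|\nabla\Pi\|_{L^2}\lesssim 2^k\|a_0\|_{L^\infty}\|\nabla u\|_{L^2}\|\nabla\Pi\|_{L^2}$ in one stroke, needs neither the Besov regularity $\dot B^{\varepsilon}_{2/\varepsilon,\infty}$ of $a_0$ nor the exponential $e^{CU(t)}$, and therefore yields a bound that is strictly stronger than (and in particular implies) \eqref{Pi-L2-0}. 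The trade-off is purely cosmetic: the paper's paraproduct route is the one that generalizes to the rougher settings used elsewhere in Section~3, while yours exploits the specific structure $\dv\,u=0$ at the level of a classical integration by parts.
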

\begin{proof}
We first get, by using standard energy estimate to \eqref{1.1}, that
$$
\frac{1}{2}\frac{d}{dt}\|\sqrt{\rho}u\|_{L^2}^2
+\|\nabla u\|_{L^2}^2
=0.
$$
On the other hand, let $a\eqdefa\f1\rho-1,$ the system \eqref{1.1} can be equivalently reformulated as
\begin{equation}
\left\{\begin{array}{l}
\displaystyle \pa_t a + \dv (a \,u)=0,\qquad (t,x)\in\R^+\times\R^2, \\
\displaystyle \pa_t u +(u\cdot\nabla)u -(1+a)(\Delta u-\grad\Pi)
=0, \\
\displaystyle \dv\, u = 0, \\
\displaystyle (a,u_0)|_{t=0}=(a_0,u_0).
\end{array}\right. \label{1.1.1}
\end{equation}
Applying the $\dv$ operator to the momentum equation of \eqref{1.1.1} yields that
\begin{equation}\label{AAA}
\dv\bigl\{(1+a)\nabla\Pi\bigr\}=\dv(a\Delta u)-\dv\bigl\{(u\cdot\nabla)u\bigr\},
\end{equation}
for some large enough integer $k$ we shall rewrite the above equality as
$$
\begin{aligned}
\dv\bigl\{(1+a)\nabla\Pi\bigr\}
&=
\dv\bigl\{(a-\dot S_ka)\Delta u\bigr\}+\dv(\dot S_ka\Delta u)
-\dv\bigl\{(u\cdot\nabla)u\bigr\}
\\&
=\dv\bigl\{(a-\dot S_ka)\Delta u\bigr\}+T_{\Delta u}\nabla\dot S_ka+T_{\nabla\dot S_ka}\Delta u
\\&\qquad
+\dv\bigl\{\cR(\dot S_ka,\Delta u)\bigr\}
-\dv\bigl\{(u\cdot\nabla)u\bigr\}.
\end{aligned}
$$
By taking $L^2$ inner product of the above equation with $\Pi,$ we get from the fact
 $1+a=\f1\rho\geq\f1M$ that
\begin{equation}\label{Pi-L2-1}
\begin{split}
\f1M\|\nabla\Pi\|_{L^2}^2
\le
\|\nabla\Pi\|_{L^2}\Bigl(\|(a-\dot S_ka)\Delta u\|_{L^2}
&+\|T_{\Delta u}\nabla\dot S_ka\|_{\dot H^{-1}}+\|T_{\nabla\dot S_ka}\Delta u\|_{\dot H^{-1}}
\\&
+\|\cR(\dot S_ka,\Delta u)\|_{L^2}
+\|(u\cdot\nabla)u\|_{L^2}\Bigr).
\end{split}
\end{equation}
Thanks to the product law in Besov spaces, one can see
\begin{equation*}
  \begin{split}
&\|(a-\dot S_ka)\Delta u\|_{L^2}\leq \|a-\dot S_ka\|_{L^\infty}\|\Delta u\|_{L^2},\\
&\|T_{\Delta u}\nabla\dot S_ka\|_{\dot H^{-1}}+\|T_{\nabla\dot S_ka}\Delta u\|_{\dot H^{-1}}\lesssim \|\nabla\dot S_ka\|_{L^\infty}\|\Delta u\|_{\dot H^{-1}}\lesssim \|\nabla\dot S_ka\|_{L^\infty}\|\nabla u\|_{L^2},\\
&
\|\cR(\dot S_ka,\Delta u)\|_{L^2}\lesssim \|\cR(\dot S_ka,\Delta u)\|_{\dot B^{\varepsilon}_{\frac{2}{1+\varepsilon},2}}\lesssim\|\dot S_ka\|_{\dot B^{\varepsilon+1}_{\frac{2}{\varepsilon},\infty}}\|\nabla u\|_{L^2}\lesssim2^{k}\|a\|_{\dot B^{\varepsilon}_{\frac{2}{\varepsilon},\infty}}\|\nabla u\|_{L^2},\\
&
\|(u\cdot\nabla)u\|_{L^2} \lesssim \|u\|_{L^4}\|\nabla u\|_{L^4}\lesssim \|u\|_{L^2}^{\f12}\|\nabla u\|_{L^2}\|\Delta u\|_{L^2}^{\f12}.
\end{split}
\end{equation*}
from which and \eqref{Pi-L2-1}, we infer
\begin{equation}\label{Pi-L2-2}
\begin{split}
\|\nabla\Pi\|_{L^2}\leq &
M\|a-\dot S_ka\|_{L^\infty}\|\Delta u\|_{L^2}
+C\,2^k\|a\|_{L^\infty}\|\nabla u\|_{L^2}
\\&+C\,2^{k}\|a\|_{\dot B^{\varepsilon}_{\frac{2}{\varepsilon},\infty}}\|\nabla u\|_{L^2}
+C\,\|u\|_{L^2}^{\f 12}\|\nabla u\|_{L^2}\|\Delta u\|_{L^2}^{\f 12}.
\end{split}
\end{equation}
From the maximum principle and Lemma \ref{lemcommu}, we deduce
$$
\|a(t)\|_{L^\infty}\le\|a_0\|_{L^\infty}\andf\quad
\|a\|_{L^\infty_t(\dot B^{\varepsilon}_{\frac{2}{\varepsilon},\infty})}
\le
\|a_0\|_{\dot B^{\varepsilon}_{\frac{2}{\varepsilon},\infty}}e^{C\|\nabla u\|_{L^1_t(L^\infty)}}.
$$
Thus by the Young inequality, we deduce that for any positive constant $\eta$
$$
\begin{aligned}
\|\nabla\Pi\|_{L^1_t(L^2)}
&\leq
\bigl(M\|a-\dot S_ka\|_{L^\infty_t(L^\infty)}+\eta\bigr)\|\Delta u\|_{L^1_t(L^2)}
\\&
+C_{\eta}\,\sqrt{t}2^k\|a_0\|_{L^\infty}\|\nabla u\|_{L^2_t(L^2)}
+C_{\eta}\,\sqrt{t}2^{k}\|a_0\|_{\dot B^{\frac{N\varepsilon}{2}}_{\frac{2}{\varepsilon},\infty}}\|\nabla u\|_{L^2_t(L^2)}e^{C\|\nabla u\|_{L^1_t(L^\infty)}}
\\&
+C_{\eta}\,\|u\|_{L^\infty_t(L^2)}\|\nabla u\|_{L^2_t(L^2)}^2.
\end{aligned}
$$
Thanks to Proposition \ref{small-density-1}, we get \eqref{Pi-L2-0}, which completes the proof of Proposition \ref{Prop1}.
\end{proof}
With Proposition \ref{Prop1} in hand, we are in a position to prove the following proposition about the estimate $ \|u\|_{L^1_t(\dot B^{2}_{2,1})}$.
\begin{prop}\label{lema.1.1}
\par Let $\e\in(0,1),$ $u_0\in\dot B^{0}_{2,1}$
and $a_0\in\dot B^{\varepsilon}_{\frac{2}{\varepsilon},1}.$ Let $(a,u,\na\Pi)$ be a  smooth enough
solution of \eqref{1.1.1} on $[0,T^\ast[,$ then there is small positive time $T_1<T^\ast$ such that, for all $t \leq T_1$, there holds
\beq\label{a.1rf}
\|u\|_{L^1_t(\dot B^{2}_{2,1})} +\|\nabla u\|_{L^2_t(L^2)}
\lesssim
\sum_{j\in\Z}\bigl(1-e^{-ct2^{2j}}\bigr)\|\dot\Delta_ju_0\|_{L^2}+\sqrt{t}.
\eeq
\end{prop}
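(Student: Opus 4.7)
The plan is to rewrite the momentum equation of \eqref{1.1.1} as
\begin{equation*}
\pa_t u - \Delta u = a\,\Delta u - (1+a)\nabla\Pi - (u\cdot\nabla)u \eqdefa F,
\end{equation*}
then apply $\dot\Delta_j$ and invoke Duhamel's formula together with the heat semigroup estimate $\|e^{\tau\Delta}\dot\Delta_j f\|_{L^2}\lesssim e^{-c\tau 2^{2j}}\|\dot\Delta_j f\|_{L^2}$. Taking the $L^1_t$--norm, multiplying by $2^{2j}$ and summing over $j\in\Z$ would produce
\begin{equation*}
\|u\|_{L^1_t(\dot B^2_{2,1})}\lesssim\sum_{j\in\Z}(1-e^{-ct2^{2j}})\|\dot\Delta_j u_0\|_{L^2}+\|F\|_{L^1_t(\dot B^0_{2,1})},
\end{equation*}
while the control of $\|\nabla u\|_{L^2_t(L^2)}$ already follows from the basic energy estimate \eqref{p.0}. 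It then remains to show $\|F\|_{L^1_t(\dot B^0_{2,1})}\lesssim\sqrt{t}+(\text{terms absorbable into the LHS})$.

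For the source $F$ I would split $a=\dot S_m a+(a-\dot S_m a)$ with $m$ large, so that Proposition \ref{small-density-1} makes $\|a-\dot S_m a\|_{\widetilde L^\infty_t(\dot B^{\varepsilon}_{\frac{2}{\varepsilon},1})}$ arbitrarily small on a short time interval. The $2$-D Besov product laws then bound $\|(a-\dot S_m a)\Delta u\|_{L^1_t(\dot B^0_{2,1})}$ by that small norm times $\|u\|_{L^1_t(\dot B^2_{2,1})}$, which is absorbable into the LHS. The complementary piece $\dot S_m a\cdot\Delta u$ carries a polynomial $m$-dependent factor but a companion $\sqrt{t}$ obtained by Cauchy--Schwarz against $\|\nabla u\|_{L^2_t(L^2)}$. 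For the pressure $(1+a)\nabla\Pi$, I would reuse the Bony paraproduct decomposition of the elliptic identity \eqref{AAA} as in Proposition \ref{Prop1}, upgraded dyadically to a $\dot B^0_{2,1}$ bound of $\|\nabla\Pi\|_{L^1_t}$ with the same $\sqrt{t}\cdot 2^k$ structure as in \eqref{Pi-L2-0}. The convection term is controlled via the $2$-D Ladyzhenskaya inequality $\|u\cdot\nabla u\|_{L^2}\lesssim\|u\|_{L^2}^{1/2}\|\nabla u\|_{L^2}\|\Delta u\|_{L^2}^{1/2}$ combined with Cauchy--Schwarz in time and Young's inequality, so that $\|u\cdot\nabla u\|_{L^1_t(L^2)}\leq \eta\|\Delta u\|_{L^1_t(L^2)}+C_\eta\|u_0\|_{L^2}\|\nabla u\|_{L^2_t(L^2)}^2$, whose last factor is controlled by \eqref{p.0}.

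The principal obstacle is that $a\in\dot B^{\varepsilon}_{\frac{2}{\varepsilon},1}$ sits exactly at the scaling threshold, so direct product estimates for $a\Delta u$ and $(1+a)\nabla\Pi$ return a linear dependence on $\|u\|_{L^1_t(\dot B^2_{2,1})}$ with no small prefactor. The remedy, mirroring the strategy for global existence, is twofold: first, choose $m$ large enough that $\|a-\dot S_m a\|$ is small enough to absorb the high-frequency product into the LHS; second, restrict to a sufficiently short interval $[0,T_1]$ so that all the $\sqrt{t}\,2^{k}$ and $\sqrt{t}\,2^{m}$ contributions from the low-frequency products and from the pressure estimate become smaller than any prescribed $\eta$. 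A standard continuity/bootstrap argument then yields a $T_1>0$ on which \eqref{a.1rf} holds.
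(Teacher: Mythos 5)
Your route (heat--semigroup Duhamel for $\pa_t u-\Delta u=F$ plus product estimates on $F$ in $L^1_t(\dot B^0_{2,1})$) is genuinely different from the paper's, which works with the Leray-projected, density-weighted dyadic energy identity \eqref{a.2}--\eqref{T-D} and estimates the \emph{commutators} $[\dot\Delta_j\mathbb{P};u\cdot\nabla]u$ and $[\dot\Delta_j\mathbb{P};a](\Delta u-\nabla\Pi)$. That structural choice is not cosmetic: by \eqref{a-a-5} the commutator with $a$ is summable in $j$ while consuming only $\|\Delta u-\nabla\Pi\|_{L^1_t(L^2)}$, so the pressure enters the whole argument only through $\|\nabla\Pi\|_{L^1_t(L^2)}$, which Proposition \ref{Prop1} supplies independently. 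Your formulation instead puts $(1+a)\nabla\Pi$ into $F$ and therefore requires $\|\nabla\Pi\|_{L^1_t(\dot B^0_{2,1})}$. That bound is the content of the second Proposition \ref{prop-H-negtive} combined with product estimates on $a\Delta u$ and $u\cdot\nabla u$ in $\dot B^0_{2,1}$, and in the paper it is only established \emph{after} this proposition (Proposition \ref{prop-unif-bdd-1}), using \eqref{a.1rf} as input. It can probably be folded into a joint bootstrap (splitting $a=\dot S_ma+(a-\dot S_ma)$ inside the elliptic equation and exploiting $\dv\Delta u=0$ to convert the dangerous paraproduct $T_{\dot S_ma}\Delta u$ into $T_{\nabla\dot S_ma}\Delta u$), but this is a substantial extra layer of work that ``upgrade \eqref{Pi-L2-0} dyadically'' does not capture, and you should be explicit that no circularity is introduced.

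The concrete gap is your treatment of $\|\nabla u\|_{L^2_t(L^2)}$. The energy identity \eqref{p.0} gives only $\|\nabla u\|_{L^2_t(L^2)}\lesssim\|u_0\|_{L^2}$, a bound that does \emph{not} tend to $0$ as $t\to0$ (quantitatively, in terms of the data alone), whereas the right-hand side of \eqref{a.1rf} does. This breaks the proof in two places. First, the $\|\nabla u\|_{L^2_t(L^2)}$ term on the left of \eqref{a.1rf} is simply not dominated by the right-hand side if all you invoke is \eqref{p.0}. Second, and more seriously, your convection estimate leaves the term $C_\eta\|u_0\|_{L^2}\|\nabla u\|_{L^2_t(L^2)}^2$, which under \eqref{p.0} is a constant of size $\|u_0\|_{L^2}^3$ rather than something $\lesssim\sqrt t$ or absorbable; the same problem recurs in the $\|\nabla u\|_{L^2_t(L^2)}^2$ contributions to the pressure bound \eqref{Pi-L2-0}. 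Since the entire purpose of \eqref{a.1rf} is that its right-hand side is small for small $t$ (this is what propagates the smallness of $\|a-\dot S_ka\|$ via Proposition \ref{small-density-1} and gives a lower bound on the lifespan of the approximate solutions depending only on the data), this loss is fatal as written. The paper repairs exactly this point by running the same dyadic machinery a second time for $\|\nabla u\|_{L^2_t(L^2)}$, using $\ell^1\hookrightarrow\ell^2$ to get the companion estimate \eqref{a.7b} with the vanishing prefactor $\bigl(\sum_j(1-e^{-ct2^{2j}})\|\dot\Delta_ju_0\|_{L^2}^2\bigr)^{1/2}$, and then closing \eqref{a.7} and \eqref{a.7b} together in \eqref{a.7c}. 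You would need to do the analogous thing with your Duhamel formula (the $L^2_t\dot H^1$ smoothing estimate) and couple the two inequalities before absorbing.
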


\begin{proof} Let $\mathbb{P}\eqdefa I+\na(-\D)^{-1}\dv$ be the Leray
projection operator. We get, by first dividing the momentum equation
of \eqref{1.1} by $\rho$ and then applying the resulting equation by
the operator $\mathbb{P},$ that
$$
\partial_t u+\mathbb{P}\bigl(u\cdot\nabla u\bigr)
-\mathbb{P}\bigl({\rho}^{-1}\{\Delta u-\nabla\Pi\}\bigr)
=0.
$$
Applying $\dot\Delta_j$ to the above equation and using a standard
commutator's process, we write
\beq\label{a.2}
\begin{aligned}
\rho\partial_t\dot\Delta_ju+\rho
u\cdot\nabla\dot\Delta_ju-\D\dot\D_ju
&=-\rho[\dot\Delta_j\mathbb{P}; u\cdot\nabla]u
+\rho[\dot\Delta_j\mathbb{P};
{\rho}^{-1}]\bigl(\Delta u-\nabla\Pi\bigr).
\end{aligned}
\eeq
Taking $L^2$ inner product of \eqref{a.2} with
$\dot\Delta_ju,$ we obtain
\beq\label{a.2da}
\begin{aligned}
\frac{1}{2}&\frac{d}{dt}\int_{\R^2}\rho|\dot\Delta_ju|^2\,dx-
\int_{\R^2}\D\dot\Delta_ju\dot\Delta_j u\,dx
\\&
\leq \|\dot\Delta_ju\|_{L^2}
\Bigl(\|\rho[\Delta_j\mathbb{P}; u\cdot\nabla]u\|_{L^2}
+\|\rho[\dot\Delta_j\mathbb{P};\frac{1}{\rho}]\bigl(\Delta u-\nabla\Pi\bigr)\|_{L^2}\Bigr).
\end{aligned}
\eeq
We  get, by using integration by parts and  Lemma \ref{lem2.1}, that
\beno \begin{split} -\int_{\R^2}\D\dot\Delta_ju\ |\
\Delta_j u\,dx =&\int_{\R^2}|\nabla\dot\D_ju|^2\,dx
\geq
\bar{c}2^{2j}\|\dot\Delta_ju\|_{L^2}^2.
\end{split}
\eeno
 Thus, we deduce from \eqref{a.2da} that
 $$
 \begin{aligned}
&\frac{d}{dt}\|\sqrt{\rho}\dot\Delta_ju\|_{L^2}^2
+2c2^{2j}\|\sqrt{\rho}\dot\Delta_ju\|_{L^2}^2\\
&\lesssim\|\sqrt{\rho}\dot\Delta_ju\|_{L^2}(
\|[\dot\Delta_j\mathbb{P}; u\cdot\nabla]u\|_{L^2}
+\|[\dot\Delta_j\mathbb{P};\frac{1}{\rho}]\bigl(\Delta u-\nabla\Pi\bigr)\|_{L^2}),
\end{aligned}
$$
where
$c=\bar{c}\bigl/M.$ This gives rise to
\begin{equation}\label{T-D}
\begin{aligned}
&\|\sqrt{\rho}\dot\Delta_ju(t)\|_{L^2}
\lesssim
e^{-c2^{2j}t}\|\sqrt{\rho_0}\dot\Delta_ju_0\|_{L^2}
\\&
+\int_0^te^{-c2^{2j}(t-t')}
\Bigl(\|[\dot\Delta_j\mathbb{P}; u\cdot\nabla]u\|_{L^2}
+\|[\dot\Delta_j\mathbb{P};\frac{1}{\rho}]\bigl(\Delta u-\nabla\Pi\bigr)\|_{L^2}\Bigr)(t')\,dt'.
\end{aligned}
\end{equation}
As a consequence,  by virtue of Definition \ref{chaleur+}, we infer
\beq\label{a.3}
\begin{aligned}
\|u\|_{L^1_t(\dot B^{2}_{2,1})}
\lesssim
& \sum_{j\in\Z}\bigl(1-e^{-ct2^{2j}}\bigr)\|\dot\Delta_ju_0\|_{L^2}
+\sum_{j\in\Z}\|[\dot\Delta_j\mathbb{P};
u\cdot\nabla]u\|_{L^1_t(L^2)}
\\&
+\sum_{j\in\Z}
\|[\dot\Delta_j\mathbb{P};\frac{1}{\rho}]\bigl(\Delta u-\nabla\Pi\bigr)\|_{L^1_tL^2}.
\end{aligned}
\eeq
In what follows, we shall deal with term by term the right-hand
side of \eqref{a.3}. Firstly applying Bony's decomposition
\eqref{bony} yields
\beno
[\dot\Delta_j\mathbb{P};u\cdot\nabla]u
=
[\dot\Delta_j\mathbb{P};T_u\cdot\nabla]u+\dot\D_j\mathbb{P}T'_{\na u}u-T'_{\na\dot\D_ju}u.
\eeno
Hence, due to Lemma 1 of \cite{Plan}, we achieve
\beno \begin{split}
\|[\dot\Delta_j\mathbb{P};
T_u\cdot\nabla]u\|_{L^2}
\lesssim&
\sum_{|j-\ell|\leq 4}
\|\na\dot  S_{\ell-1}u\|_{L^\infty}\|\dot\D_\ell u\|_{L^2}
\\
\lesssim&
\sum_{|j-\ell|\leq 4}\|\dot\D_\ell u\|_{L^2}
 \sum_{-1\le k\le\ell-2}
 2^{k}\| \nabla\dot\Delta_ku\|_{L^2},
\end{split}
\eeno
which follows that
$$
\sum_{j\in\Z}\|[\dot\Delta_j\mathbb{P};
T_u\cdot\nabla]u\|_{L^2}
\lesssim
\|\nabla u\|_{L^2}^2.
$$
While by using Lemma \ref{lem2.1}, one has
\beno
\begin{split}
 \|\dot\D_j\mathbb{P}T'_{\na u}u\|_{L^2}
\lesssim
 2^{j}\sum_{\ell\geq j-3}\|\dot\D_\ell u\|_{L^2}\|\dot S_{\ell+2}\na u\|_{L^2}
\lesssim
\|\nabla u\|_{L^2}\|u\|_{\dot B^{1}_{2,1}},
\end{split}
\eeno
which along with the interpolation inequality
$
\|u\|_{\dot B^{1}_{2,1}}\lesssim
\|u\|_{L^2}^{\f12}\|u\|_{\dot B^2_{2,1}}^{\f12},
$
implies that
\beno
\begin{split}
\sum_{j\in\Z}\|\dot\D_j\mathbb{P}T'_{\na u}u\|_{L^1_t(L^2)}
\le
C_{\eta}\|u\|_{L^\infty_t(L^2)}\|\nabla u\|_{L^2_t(L^2)}^2
+\eta\|u\|_{L^1_t(\dot B^2_{2,1})}
\end{split}
\eeno
for any positive constant $\eta$.
The same estimates holds for $T'_{\na\dot\D_ju}u.$
 Thus we obtain
\beq\label{a.4}
\begin{split}
\sum_{j\in\Z}
\|[\Dot\Delta_j\mathbb{P},u\cdot\nabla]u\|_{L^1_t(L^2)}
\le
C\|\nabla u\|_{L^2_T(L^2)}^2+C_\eta\|u\|_{L^\infty_t(L^2)}\|\nabla u\|_{L^2_t(L^2)}^2
+\eta\|u\|_{L^1_t(\dot B^2_{2,1})}.
\end{split}
 \eeq
Exactly along the same line to the proof of \eqref{a.4}, we  get, by
applying Bony's decomposition \eqref{bony}, that (for $a:=\frac{1}{\rho}-1$)
\beno
[\dot\Delta_j\mathbb{P}; {\rho}^{-1}]f=[\dot\Delta_j\mathbb{P},a]f
=[\dot\Delta_j\mathbb{P},T_a]f
+\dot\D_j\mathbb{P}T'_fa
-T'_{\dot\Delta_j\mathbb{P}f}a,
\eeno
It follows  again from Lemma 1 of \cite{Plan}  that
$$
\begin{aligned}
\sum_{j\in\Z}\|[\dot\Delta_j\mathbb{P},T_a]f\|_{L^2}
&\lesssim
\|\na a\|_{\dot B^{-1}_{\infty,1}}\|f\|_{L^2}
\lesssim
\|a\|_{\dot B^{0}_{\infty,1}}\|f\|_{L^2},
\end{aligned}
$$
and
$$
\sum_{j\in\Z}
\|\dot\D_j\mathbb{P}T_f'a\|_{L^2}
\lesssim
\sum_{\ell\geq j-3}2^{j\varepsilon}
\bigl\|\dot\D_\ell a\bigr\|_{L^{\frac{2}{\varepsilon}}}\|\dot
S_{\ell+2}f\|_{L^2}
\lesssim
\|a\|_{\dot B^{\varepsilon}_{\frac{2}{\varepsilon},1}}\|f\|_{L^2}.
$$
The same estimate holds for
$\bigl\|T'_{\dot\Delta_j\mathbb{P}f}a\bigr\|_{L^2}$.
Therefore, we obtain
\begin{equation}\label{a-a-5}
\sum_{j\in\Z}
\bigl\|[\dot\Delta_j\mathbb{P}; a]f\bigr\|_{L^1_t(L^2)}
\lesssim
\|a\|_{\widetilde L^\infty_t(\dot B^{\varepsilon}_{\frac{2}{\varepsilon},1})}\|f\|_{L^1_t(L^2)},
\end{equation}
from which, we deduce that
\begin{equation}\label{a.5}
\sum_{j\in\Z}
\|[\dot\Delta_j\mathbb{P};{\rho}^{-1}]\nabla\Pi\|_{L^1_tL^2}
\lesssim
\|a\|_{\widetilde L^\infty_t(\dot B^{\varepsilon}_{\frac{2}{\varepsilon},1})}
\|\nabla\Pi\|_{L^1_t(L^2)}.
\end{equation}
On the other hand, note that
$$
[\dot\Delta_j\mathbb{P}; {\rho}^{-1}]\Delta u=[\dot\Delta_j\mathbb{P},a]\Delta u
=[\dot\Delta_j\mathbb{P},a-\dot S_ka]\Delta u+[\dot\Delta_j\mathbb{P},\dot S_ka]\Delta u.
$$
Thanks to the inequality \eqref{a-a-5}, we deduce
\begin{equation}\label{estimate-1}
\sum_{j\in\Z}\|[\dot\Delta_j\mathbb{P},a-\dot S_ka]\Delta u \|_{L^1_tL^2}\lesssim
\|a-\dot S_ka\|_{\widetilde L^\infty_t(\dot B^{\varepsilon}_{\frac{2}{\varepsilon},1})}
\|\Delta u\|_{L^1_t(L^2)}.
\end{equation}
For $[\dot\Delta_j\mathbb{P},\dot S_ka]\Delta u$, Bony's decomposition implies
$$
[\dot\Delta_j\mathbb{P},\dot S_ka]\Delta u=[\dot\Delta_j\mathbb{P},T_{\dot S_k a}]\Delta u
+\dot\D_j\mathbb{P}T'_{\Delta u}\dot S_k a
-T'_{\dot\Delta_j\Delta u}\dot S_k a.
$$
It follows  again from Lemma 1 of \cite{Plan}  that
$$
\begin{aligned}
\sum_{j\in\Z}\|[\dot\Delta_j\mathbb{P},T_{\dot S_k a}]\Delta u \|_{L^2}
&\lesssim
\|\na\dot S_k  a\|_{L^\infty}\|\Delta u\|_{\dot B^{-1}_{2,1}}
\lesssim
2^k\|a\|_{L^\infty}\|u\|_{\dot B^{1}_{2,1}}
\\&
\le
\gamma\|\Delta u\|_{L^2}+C_\gamma2^{2k}\|a\|_{L^\infty}^2\|\nabla u\|_{L^2}
\end{aligned}
$$
for any positive constant $\gamma$,
and
$$
\begin{aligned}
\sum_{j\in\Z}
\|\dot\D_j\mathbb{P}T'_{\Delta u}\dot S_ka\|_{L^2}
&\lesssim
\sum_{\ell\geq j-3}2^{j\varepsilon}
\bigl\|\dot\D_\ell\dot S_k a\bigr\|_{L^{\frac{2}{\varepsilon}}}\|\dot
S_{\ell+2}\Delta u\|_{L^2}
\\&
\lesssim
\|\dot S_ka\|_{\dot B^{1+\varepsilon}_{\frac{2}{\varepsilon},1}}\|\nabla u\|_{L^2}
\lesssim
2^k\|a\|_{\dot B^{\varepsilon}_{\frac{2}{\varepsilon},1}}\|\nabla u\|_{L^2}.
\end{aligned}
$$
The same estimate holds for
$\bigl\|T'_{\dot\Delta_j\Delta u}\dot S_ma\bigr\|_{L^2}$.
Hence, we obtain
\begin{equation*}
\begin{aligned}
\sum_{j\in\Z}
\bigl\|[\dot\Delta_j\mathbb{P};\dot S_ka]\Delta u\bigr\|_{L^1_t(L^2)}
&\le
C\sqrt{t}2^k\|a\|_{\widetilde L^\infty_t(\dot B^{\varepsilon}_{\frac{2}{\varepsilon},1})}\|\na u\|_{L^2_t(L^2)}
+\gamma\|\Delta u\|_{L^1_t(L^2)}
\\&
+C_\gamma\sqrt{t}2^{2k}\|a\|_{L^\infty}^2\|\nabla u\|_{L^2_t(L^2)},
\end{aligned}
\end{equation*}
which along with \eqref{estimate-1} ensures that
\begin{equation}\label{a-b-5}
\begin{aligned}
\sum_{j\in\Z}
\bigl\|[\dot\Delta_j\mathbb{P}; a]\Delta u\bigr\|_{L^1_t(L^2)}
&\le
(\gamma+C\|a-\dot S_ka\|_{\widetilde L^\infty_t(\dot B^{\varepsilon}_{\frac{2}{\varepsilon},1})})\|\Delta u\|_{L^1_t(L^2)}
\\&
+C_\gamma\sqrt{t}2^{2k}\|a\|_{L^\infty}^2\|\nabla u\|_{L^2_t(L^2)}+C\sqrt{t}2^k\|a\|_{\widetilde L^\infty_t(\dot B^{\varepsilon}_{\frac{2}{\varepsilon},1})}\|\na u\|_{L^2_t(L^2)}.
\end{aligned}
\end{equation}
Substituting \eqref{a.4}, \eqref{a.5} and \eqref{a-b-5}
into \eqref{a.3}, and taking $\eta$ and $\gamma$ small enough, we obtain
\begin{equation}\label{a.7}
\begin{aligned}
\|u\|_{L^1_t(\dot B^{2}_{2,1})}
\leq
& C\sum_{j\in\Z}\bigl(1-e^{-ct2^{2j}}\bigr)\|\dot\Delta_ju_0\|_{L^2}
+C\|\nabla u\|_{L^2_t(L^2)}^2
\\&
+C\|a\|_{\widetilde L^\infty_t(\dot B^{\varepsilon}_{\frac{2}{\varepsilon},1})}
\|\nabla\Pi\|_{L^1_t(L^2)}+ C\|a-\dot S_ka\|_{\widetilde L^\infty_t(\dot B^{\varepsilon}_{\frac{2}{\varepsilon},1})}\|\Delta u\|_{L^1_t(L^2)}
\\&
+C\sqrt{t}2^k(1+\|a\|_{\widetilde L^\infty_t(\dot B^{\varepsilon}_{\frac{2}{\varepsilon},1})}).
\end{aligned}
\end{equation}

Thanks to \eqref{density-est-1} and \eqref{dens}, we get
\begin{equation}\label{density-est-1and2}
\begin{split}
&\|a\|_{\widetilde L^\infty_t(\dot B^{\varepsilon}_{\frac{2}{\varepsilon},1})}
+\|a\|_{\widetilde L^\infty_t(\dot B^{0}_{2,1})}
\lesssim
e^{C\|\nabla u\|_{L^1_t(L^\infty)}},\\
&\|a-\dot S_ka\|_{\widetilde{L}^\infty_t(\dot B^{\varepsilon}_{\frac{2}{\varepsilon},1})}\leq
\sum_{q\geq k}2^{q\varepsilon}\|\dot\Delta_q a_0\|_{L^{\frac{2}{\varepsilon}}}+\|a_0\|_{\dot B^{\varepsilon}_{\frac{2}{\varepsilon},1}} (e^{C\|\nabla u\|_{L^1_t(L^\infty)}}-1).
\end{split}
\end{equation}
Therefore, thanks to \eqref{Pi-L2-0} and \eqref{density-est-1and2}, we obtain from  \eqref{a.7} that
\begin{equation}\label{a.7}
\begin{aligned}
&\|u\|_{L^1_t(\dot B^{2}_{2,1})}
\leq
C\sum_{j\in\Z}\bigl(1-e^{-ct2^{2j}}\bigr)\|\dot\Delta_ju_0\|_{L^2}+Ce^{C\|\nabla u\|_{L^1_t(L^\infty)}}
\bigg(\sqrt{t}2^{k}
+\|\nabla u\|_{L^2_t(L^2)}^2\Bigr)
\\&\quad
+Ce^{C\|\nabla u\|_{L^1_t(L^\infty)}}\Bigl(\eta+M\sum_{q\geq k}\|\dot\Delta_q a_0\|_{L^\infty}+M\|a_0\|_{\dot
B^{0}_{\infty,1}}\{e^{C\|\nabla u\|_{L^1_t(L^\infty)}}-1\}\Bigr)\|u\|_{L^1_t(\dot B^2_{2,1})}\\
&\qquad+ C\bigg(\sum_{q\geq k}2^{q\varepsilon}\|\dot\Delta_q a_0\|_{L^{\frac{2}{\varepsilon}}}+\|a_0\|_{\dot B^{\varepsilon}_{\frac{2}{\varepsilon},1}} (e^{C\|\nabla u\|_{L^1_t(L^\infty)}}-1)\bigg)\|u\|_{L^1_t(\dot B^2_{2,1})}.
\end{aligned}
\end{equation}
By using \eqref{T-D} again, we deduce from the fact $\ell^1\hookrightarrow \ell^2$ that
$$
\begin{aligned}
\|\nabla u\|_{L^2_t(L^2)}
\lesssim
& \Bigl(\sum_{j\in\Z}\bigl(1-e^{-ct2^{2j}}\bigr)\|\dot\Delta_ju_0\|_{L^2}^2\Bigr)^{\f12}
+\Bigl(\sum_{j\in\Z}\|[\dot\Delta_j\mathbb{P};
u\cdot\nabla]u\|_{L^1_t(L^2)}^2\Bigr)^{\f12}
\\&
+\sum_{j\in\Z}
\|[\dot\Delta_j\mathbb{P};\frac{1}{\rho}]\bigl(\Delta u-\nabla\Pi\bigr)\|_{L^1_tL^2}.
\end{aligned}
$$
Similar to the proof of\eqref{a.4}, we may get
$$
\Bigl(\sum_{j\in\Z}\|[\dot\Delta_j\mathbb{P};
u\cdot\nabla]u\|_{L^1_t(L^2)}^2\Bigr)^{\f12}
\lesssim
\|\na u\|_{L^2_t(L^2)}^2.
$$
Hence, from \eqref{a.5}, \eqref{a-b-5}, and \eqref{Pi-L2-0}, it follows that
\begin{equation}\label{a.7b}
\begin{aligned}
&\|\nabla u\|_{L^2_t(L^2)}
\lesssim
\Bigl(\sum_{j\in\Z}\bigl(1-e^{-ct2^{2j}}\bigr)\|\dot\Delta_ju_0\|_{L^2}^2\Bigr)^{\f12}
+Ce^{C\|\nabla u\|_{L^1_t(L^\infty)}}
\bigg(\sqrt{t}2^{k}
+\|\nabla u\|_{L^2_t(L^2)}^2\Bigr)
\\&\quad
+Ce^{C\|\nabla u\|_{L^1_t(L^\infty)}}\Bigl(\eta+M\sum_{q\geq k}\|\dot\Delta_q a_0\|_{L^\infty}+M\|a_0\|_{\dot
B^{0}_{\infty,1}}\{e^{C\|\nabla u\|_{L^1_t(L^\infty)}}-1\}\Bigr)\|u\|_{L^1_t(\dot B^2_{2,1})}\\
&\qquad+ C\bigg(\sum_{q\geq k}2^{q\varepsilon}\|\dot\Delta_q a_0\|_{L^{\frac{2}{\varepsilon}}}+\|a_0\|_{\dot B^{\varepsilon}_{\frac{2}{\varepsilon},1}} (e^{C\|\nabla u\|_{L^1_t(L^\infty)}}-1)\bigg)\|u\|_{L^1_t(\dot B^2_{2,1})}.
\end{aligned}
\end{equation}
Combining \eqref{a.7} and \eqref{a.7b}, we achieve
\begin{equation}\label{a.7c}
\begin{aligned}
&\|u\|_{L^1_t(\dot B^{2}_{2,1})} +\|\nabla u\|_{L^2_t(L^2)}\\
&\leq
C\sum_{j\in\Z}\bigl(1-e^{-ct2^{2j}}\bigr)\|\dot\Delta_ju_0\|_{L^2}
+Ce^{C\|\nabla u\|_{L^1_t(L^\infty)}}
\bigg(\sqrt{t}2^{k}
+\|\nabla u\|_{L^2_t(L^2)}^2\Bigr)
\\&\quad
+Ce^{C\|\nabla u\|_{L^1_t(L^\infty)}}\Bigl(\eta+M\sum_{q\geq k}\|\dot\Delta_q a_0\|_{L^\infty}+M\|a_0\|_{\dot
B^{0}_{\infty,1}}\{e^{C\|\nabla u\|_{L^1_t(L^\infty)}}-1\}\Bigr)\|u\|_{L^1_t(\dot B^2_{2,1})}\\
&\qquad+ C\bigg(\sum_{q\geq k}2^{q\varepsilon}\|\dot\Delta_q a_0\|_{L^{\frac{2}{\varepsilon}}}+\|a_0\|_{\dot B^{\varepsilon}_{\frac{2}{\varepsilon},1}} (e^{C\|\nabla u\|_{L^1_t(L^\infty)}}-1)\bigg)\|u\|_{L^1_t(\dot B^2_{2,1})}.
\end{aligned}
\end{equation}
Consequently, taking $\eta$ small enough, $k$ large enough, and then $t$ sufficiently small in \eqref{a.7c}, we deduce
\eqref{a.1rf},
which completes the proof of of Proposition \ref{lema.1.1}.
\end{proof}
Based on this, we may get further estimate about the pressure.
\begin{prop}\label{prop-H-negtive}
\par Let $\varepsilon\in(0,1)$ and $a\in\dot B^{\varepsilon}_{\frac{2}{\varepsilon},1}(\R^2)$ such that $ 0<{\underline{b}}\leq
1+a\leq {\bar{b}}$, and
\begin{equation}\label{est-unique-1}
\|a-\dot S_k a\|_{\dot B^{\varepsilon}_{\frac{2}{\varepsilon},1}}\leq c
\end{equation}
for some sufficiently small positive constant $c$ and some integer
$k\in \Z.$  Let $F \in\dot B^{0}_{2,1}(\R^2)$  and $\grad \Pi \eqdefa
\mathcal{H}_{b}({F})\in\dot B^{0}_{2,1}(\R^2)$ solves
\begin{equation}\label{unique-per-model}
\dive\,((1+a) \grad \Pi) =\dive\,F.
\end{equation}
Then there holds
\begin{equation}\label{estimate-unique-per-model}
\|\grad \Pi\|_{\dot B^{0}_{2,1}}
\lesssim
\|F\|_{\dot B^{0}_{2,1}}
+\|a\|_{\dot B^{\varepsilon}_{\frac{2}{\varepsilon},1}}\|\nabla\Pi\|_{L^2}.
\end{equation}
\end{prop}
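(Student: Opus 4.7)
The strategy mirrors the proof of the analogous $B^{-1}_{2,\infty}$ Proposition from Section 2, but adapted to the critical scale $\dot B^0_{2,1}$. Since $\dot B^{\varepsilon}_{2/\varepsilon,1}(\R^2)\hookrightarrow L^\infty$ in dimension two, the smallness hypothesis \eqref{est-unique-1} converts to $\|a-\dot S_k a\|_{L^\infty}\le Cc$, so that $1+\dot S_k a\ge \underline{b}/2$ after taking $c$ small. I first rewrite \eqref{unique-per-model} by freezing the low frequencies of $a$,
$$
\dive\bigl((1+\dot S_k a)\nabla\Pi\bigr)=\dive F-\dive\bigl((a-\dot S_k a)\nabla\Pi\bigr),
$$
apply $\dot\Delta_q$, extract a commutator with $\dot S_k a$, and take the $L^2$ inner product with $\dot\Delta_q\Pi$. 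Integrating by parts and invoking coercivity yields
$$
\|\dot\Delta_q\nabla\Pi\|_{L^2}\lesssim\|\dot\Delta_q F\|_{L^2}+\|\dot\Delta_q\bigl((a-\dot S_k a)\nabla\Pi\bigr)\|_{L^2}+\|[\dot\Delta_q,\dot S_k a]\nabla\Pi\|_{L^2}.
$$

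Summing in $q\in\Z$ then bounds $\|\nabla\Pi\|_{\dot B^0_{2,1}}$ by $\|F\|_{\dot B^0_{2,1}}$, $\|(a-\dot S_k a)\nabla\Pi\|_{\dot B^0_{2,1}}$ and $\sum_q\|[\dot\Delta_q,\dot S_k a]\nabla\Pi\|_{L^2}$. The perturbative product is handled by the Besov product rule,
$$
\|(a-\dot S_k a)\nabla\Pi\|_{\dot B^0_{2,1}}\lesssim\|a-\dot S_k a\|_{\dot B^{\varepsilon}_{2/\varepsilon,1}}\|\nabla\Pi\|_{\dot B^0_{2,1}}\le Cc\,\|\nabla\Pi\|_{\dot B^0_{2,1}},
$$
which is absorbed into the left-hand side once $c$ is chosen small enough.

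The crux of the argument, and the principal obstacle, is the commutator estimate: I must show
$$
\sum_q\|[\dot\Delta_q,\dot S_k a]\nabla\Pi\|_{L^2}\lesssim\|a\|_{\dot B^{\varepsilon}_{2/\varepsilon,1}}\|\nabla\Pi\|_{L^2}
$$
(up to an absorbable piece), so that only the $L^2$ norm of $\nabla\Pi$ appears on the right. I plan to use Bony's decomposition
$$
[\dot\Delta_q,\dot S_k a]\nabla\Pi=[\dot\Delta_q,T_{\dot S_k a}]\nabla\Pi+\bigl(\dot\Delta_q T_{\nabla\Pi}\dot S_k a-T_{\dot\Delta_q\nabla\Pi}\dot S_k a\bigr)+\bigl(\dot\Delta_q R(\dot S_k a,\nabla\Pi)-R(\dot S_k a,\dot\Delta_q\nabla\Pi)\bigr).
$$
For the pieces in which $\dot S_k a$ sits in the high-frequency slot (the $T_{\nabla\Pi}$- and $R$-type terms), I apply H\"older with the embedding $L^{2/(1-\varepsilon)}\cdot L^{2/\varepsilon}\hookrightarrow L^2$, using the Bernstein inequality $\|\dot S_{k'-1}\nabla\Pi\|_{L^{2/(1-\varepsilon)}}\lesssim 2^{k'\varepsilon}\|\nabla\Pi\|_{L^2}$ and the $\ell^1$ structure $\sum_{k'}2^{k'\varepsilon}\|\dot\Delta_{k'}a\|_{L^{2/\varepsilon}}=\|a\|_{\dot B^{\varepsilon}_{2/\varepsilon,1}}$. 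The factors $2^{k'\varepsilon}$ cancel exactly, producing the clean bound $\|a\|_{\dot B^{\varepsilon}_{2/\varepsilon,1}}\|\nabla\Pi\|_{L^2}$ after summing in $q$ (the spectral localizations $|k'-q|\le 4$ in the paraproducts and the ``diagonal'' structure of $R$ guarantee $\ell^1$-summability in $q$).

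The hardest step will be handling the paraproduct commutator $[\dot\Delta_q,T_{\dot S_k a}]\nabla\Pi$ in such a way that only $\|\nabla\Pi\|_{L^2}$ (and not $\|\nabla\Pi\|_{\dot B^0_{2,1}}$) remains on the right. Using the classical inequality $\|[\dot\Delta_q,\varphi]g\|_{L^2}\lesssim 2^{-q}\|\nabla\varphi\|_{L^\infty}\|\widetilde\dot\Delta_q g\|_{L^2}$ with $\varphi=\dot S_{k'-1}\dot S_k a$, $g=\dot\Delta_{k'}\nabla\Pi$ (so that $|k'-q|\le 4$ by spectral support), the gain of one derivative gets redistributed against the natural $2^{k'\varepsilon}$ weights of $a$ via Bernstein on $\dot\Delta_{k'}a$ in $L^{2/\varepsilon}$ versus $L^\infty$; combined with the $\ell^1$ structure of $\|a\|_{\dot B^{\varepsilon}_{2/\varepsilon,1}}$, this again yields a bound of the desired form. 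Collecting all contributions and absorbing the $Cc\|\nabla\Pi\|_{\dot B^0_{2,1}}$ piece into the LHS completes the proof of \eqref{estimate-unique-per-model}.
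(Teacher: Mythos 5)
Your proposal is correct and follows essentially the same route as the paper: freeze the low frequencies $\dot S_k a$, localize with $\dot\Delta_q$, use coercivity of $1+\dot S_k a\ge \underline{b}/2$, absorb the small term $(a-\dot S_k a)\nabla\Pi$ via the product law and \eqref{est-unique-1}, and bound $\sum_{q}\|[\dot S_k a,\dot\Delta_q]\nabla\Pi\|_{L^2}$ by $\|a\|_{\dot B^{\varepsilon}_{2/\varepsilon,1}}\|\nabla\Pi\|_{L^2}$ exactly as in the paper's commutator estimate \eqref{a-a-5}. One bookkeeping remark: in the $T'$/remainder pieces the outer sum runs over all $q\lesssim k'$, so the convergent geometric factor should come from Bernstein applied to the outer block $\dot\Delta_q$ (yielding $2^{q\varepsilon}$ and then $\sum_{q\le k'+3}2^{q\varepsilon}\lesssim 2^{k'\varepsilon}$ to pair with $\|\dot\Delta_{k'}a\|_{L^{2/\varepsilon}}$), rather than from placing $2^{k'\varepsilon}$ on $\nabla\Pi$ at the inner scale as written; this is how \eqref{a-a-5} is organized and the conclusion is unchanged.
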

\begin{proof}
We first deduce from \eqref{est-unique-1} and ${\underline{b}}\leq
1+a$ that
\begin{equation}\label{lower-bdd-1}
1+\dot S_k a=1+a+(\dot S_k a-a) \geq{{\underline{b}}\over{2}}.
\end{equation}
We rewrite \eqref{unique-per-model} in the following form
\begin{equation*}
\dv[(1+\dot S_k a)\nabla\Pi]=\dv\,F+\dv\,[(\dot S_k a-a)\nabla\Pi],
\end{equation*}
and applying  $\dot\Delta_q$ to the above equation gives
\begin{equation*}
\dv\,[(1+\dot S_k a)\dot\Delta_q\nabla\Pi]=\dv\,\dot\Delta_q F+\dv\, \dot\Delta_q
[(\dot S_k a-a)\nabla\Pi]+\dv\,([\dot S_k a,\dot\Delta_q]\nabla\Pi).
\end{equation*}
Taking the $L^{2}$ inner product of this equation with $\dot\Delta_q\Pi,$
we obtain by a similar estimate of \eqref{a.5} that
\begin{equation*}\label{estimate-unique-per-model-1}
\begin{split}
 \|\grad \Pi\|_{\dot B^{0}_{2,1}}
 &\lesssim
 \|(\dot S_k a-a)\nabla\Pi\|_{\dot B^{0}_{2,1}}
+\|F\|_{\dot B^{0}_{2,1}}
+\sum_{q\in\Z}\bigl\|[\dot S_k a,\dot \Delta_q]\nabla\Pi\bigr\|_{L^2}
 \\
&\lesssim
\|\dot S_k a-a\|_{\dot B^{\varepsilon}_{\frac{2}{\varepsilon},1}}\|\nabla\Pi\|_{\dot B^{0}_{2,1}}
+\|F\|_{\dot B^{0}_{2,1}}
+\|a\|_{\dot B^{\varepsilon}_{\frac{2}{\varepsilon},1}}\|\nabla\Pi\|_{L^2},
\end{split}
\end{equation*}
which along with \eqref{est-unique-1} leads to \eqref{estimate-unique-per-model}.
 This completes the proof of the Proposition.
\end{proof}
Consequently, we may derive the main result of the section as follows.
\begin{prop}\label{prop-unif-bdd-1}
\par Under the assumptions of Proposition \ref{lema.1.1}, there holds that for any $t\in [0,T_1]$
\begin{equation}\label{momun-pre-0}
\begin{split}
\|u\|_{\widetilde L^\infty_t(\dot B^0_{2,1})}+
\|u\|_{L^1_t(\dot B^2_{2,1})}
+\|\partial_tu\|_{L^1_t(\dot B^{0}_{2,1})}
+\|\nabla\Pi\|_{L^1_t(\dot B^{0}_{2,1})}
\leq
C_0,
\end{split}
\end{equation}
where the constant $C_0$ depends only on the initial data $(\rho_0, \, u_0)$, and the positive time $T_1$ is determined by Proposition \ref{lema.1.1}.
\end{prop}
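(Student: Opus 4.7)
My plan is to derive the four estimates in \eqref{momun-pre-0} in sequence, reusing the commutator machinery already developed in the proofs of Propositions~\ref{Prop1} and \ref{lema.1.1}. The bound on $\|u\|_{L^1_t(\dot B^2_{2,1})}$ is already contained in \eqref{a.1rf}, since $\sum_j (1-e^{-ct2^{2j}})\|\dot\Delta_j u_0\|_{L^2}\leq \|u_0\|_{\dot B^0_{2,1}}$. For $\|u\|_{\widetilde L^\infty_t(\dot B^0_{2,1})}$, I would return to the pointwise-in-$t$ estimate \eqref{T-D} obtained from the heat kernel representation, take the $L^\infty_t$ norm of each dyadic block, and sum over $j\in\Z$. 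Since $\rho$ is bounded from above and below, the leading term yields $\|u_0\|_{\dot B^0_{2,1}}$, and the commutator terms $[\dot\Delta_j\mathbb{P};u\cdot\nabla]u$ and $[\dot\Delta_j\mathbb{P};\rho^{-1}](\Delta u-\nabla\Pi)$ are estimated by the exact same Bony decomposition calculations used in \eqref{a.4}, \eqref{a.5} and \eqref{a-b-5}, which we have already controlled by $C_0$ on $[0,T_1]$.

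Next, for the pressure in $L^1_t(\dot B^0_{2,1})$, I would use the elliptic equation \eqref{AAA}, namely
\[
\dive\bigl((1+a)\nabla\Pi\bigr)=\dive F\quad\text{with}\quad F\eqdefa a\Delta u-(u\cdot\nabla)u.
\]
The smallness hypothesis \eqref{est-unique-1} of Proposition~\ref{prop-H-negtive} is available on $[0,T_1]$ because $k$ has been chosen large enough and $T_1$ small enough in the proof of Proposition~\ref{lema.1.1} to ensure $\|a-\dot S_k a\|_{\widetilde L^\infty_t(\dot B^\varepsilon_{2/\varepsilon,1})}\le c$. Applying Proposition~\ref{prop-H-negtive} then gives
\[
\|\nabla\Pi\|_{\dot B^0_{2,1}}\lesssim \|F\|_{\dot B^0_{2,1}}+\|a\|_{\dot B^\varepsilon_{2/\varepsilon,1}}\|\nabla\Pi\|_{L^2}.
\]
The product estimate $\|a\Delta u\|_{\dot B^0_{2,1}}\lesssim \|a\|_{\dot B^\varepsilon_{2/\varepsilon,1}}\|u\|_{\dot B^2_{2,1}}$ and $\|(u\cdot\nabla)u\|_{\dot B^0_{2,1}}\lesssim \|u\|_{\dot B^1_{2,1}}\|u\|_{\dot B^1_{2,1}}$ (together with the interpolation $\|u\|_{\dot B^1_{2,1}}\lesssim \|u\|_{\dot B^0_{2,1}}^{1/2}\|u\|_{\dot B^2_{2,1}}^{1/2}$) reduce everything to quantities already bounded by the previous step and by the energy estimate \eqref{Pi-L2-0}. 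Integration in time then delivers $\|\nabla\Pi\|_{L^1_t(\dot B^0_{2,1})}\le C_0$.

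Finally, for the time derivative I would read off $\partial_t u$ directly from the momentum equation in \eqref{1.1.1}:
\[
\partial_t u=-u\cdot\nabla u+(1+a)(\Delta u-\nabla\Pi),
\]
and estimate in $\dot B^0_{2,1}$ using the product laws of the previous paragraph, together with the just-proved bounds on $u$ and $\nabla\Pi$. Integrating over $[0,T_1]$ then closes the last piece of \eqref{momun-pre-0}.

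The main technical obstacle I anticipate is the elliptic estimate in the pressure step: to apply Proposition~\ref{prop-H-negtive} one must verify the smallness condition \eqref{est-unique-1} uniformly on $[0,T_1]$, and to control $\|F\|_{L^1_t(\dot B^0_{2,1})}$ one must handle $\|a\Delta u\|_{L^1_t(\dot B^0_{2,1})}$ without losing a derivative on $a$. The point is that $\dot B^\varepsilon_{2/\varepsilon,1}$ is an algebra multiplier for $\dot B^0_{2,1}$, so no extra regularity on $a$ is required, but one must split $a=\dot S_k a+(a-\dot S_k a)$ and handle the low-frequency part by the $L^\infty$ bound with a factor $2^k$, absorbing it into the already-chosen $T_1$; this is exactly the same trick used repeatedly in Section~\ref{sect3}, and is what tethers everything to the constant $C_0$.
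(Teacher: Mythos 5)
Your proposal is correct and follows essentially the same route as the paper: the $L^1_t(\dot B^2_{2,1})$ bound is read off from \eqref{a.1rf}, the $\widetilde L^\infty_t(\dot B^0_{2,1})$ bound comes from \eqref{T-D} together with the commutator estimates \eqref{a.4}, \eqref{a.5}, \eqref{a-b-5}, the pressure is handled by applying Proposition \ref{prop-H-negtive} to \eqref{AAA} (with the smallness condition \eqref{est-unique-1} secured on $[0,T_1]$ via Propositions \ref{small-density-1} and \ref{lema.1.1}) plus the product laws $\|a\Delta u\|_{\dot B^0_{2,1}}\lesssim\|a\|_{\dot B^{\varepsilon}_{2/\varepsilon,1}}\|u\|_{\dot B^2_{2,1}}$ and the interpolation for $(u\cdot\nabla)u$, and $\partial_t u$ is read off from the momentum equation. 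This matches the paper's proof in all essential steps.
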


\begin{proof} Back to the proof of Proposition \ref{lema.1.1}, according to \eqref{AAA}, we have
\begin{equation}\label{momun-pre-1}
\dv\bigl\{(1+a)\nabla\Pi\bigr\}=\dv(a\Delta u)-\dv\bigl\{(u\cdot\nabla)u\bigr\}.
\end{equation}
Combining Proposition \ref{small-density-1} with Proposition \ref{lema.1.1}, we know that the inequality \eqref{est-unique-1} holds for any $t\in [0, T_1]$.  Then applying Proposition \ref{prop-H-negtive} to \eqref{momun-pre-1} yields that
\begin{equation*}
\|\grad \Pi\|_{\dot B^{0}_{2,1}}
\lesssim
\|a\Delta u\|_{\dot B^{0}_{2,1}}+\|(u\cdot\nabla)u\|_{\dot B^{0}_{2,1}}
+\|a\|_{\dot B^{\varepsilon}_{\frac{2}{\varepsilon},1}}\|\nabla\Pi\|_{L^2},
\end{equation*}
and then
\begin{equation}\label{momun-pre-2}
\|\grad \Pi\|_{L^1_t(\dot B^{0}_{2,1})}
\lesssim
\|a\Delta u\|_{L^1_t(\dot B^{0}_{2,1})}+\|(u\cdot\nabla)u\|_{L^1_t(\dot B^{0}_{2,1})}
+\|a\|_{L^\infty_t(\dot B^{\varepsilon}_{\frac{2}{\varepsilon},1})}\|\nabla\Pi\|_{L^1_t(L^2)}.
\end{equation}
Due to the product law in Besov spaces and the interpolation inequality, we get
\begin{equation*}
\begin{aligned}
&\|(u\cdot\nabla)u\|_{L^1_t(\dot B^{0}_{2,1})}
\lesssim\|\dv\,(u\,\otimes\,u)\|_{L^1_t(\dot B^{0}_{2,1})}
\lesssim\|u\,\otimes\,u\|_{L^1_t(\dot B^{1}_{2,1})}\lesssim
\int_0^t\|u\|_{\dot B^{1}_{2,1}}^2\,d\tau\\
&
\lesssim
\int_0^t\|u\|_{L^2}\|\Delta u\|_{L^2}\,d\tau\lesssim
\|u\|_{L^\infty_t(L^2)}\|\Delta u\|_{L^1_t(L^2)} \lesssim
\|u\|_{L^\infty_t(L^2)}\|u\|_{L^1_t(\dot B^2_{2,1})},\\
\end{aligned}
\end{equation*}
\begin{equation*}
\begin{aligned}
&\|a\Delta u\|_{L^1_t(\dot B^{0}_{2,1})}
\lesssim
\|a\|_{\widetilde L^\infty_t(\dot B^{\varepsilon}_{\frac{2}{\varepsilon},1})}\|u\|_{L^1_t(\dot B^2_{2,1})}
\end{aligned}
\end{equation*}
Hence, thanks to Propositions \ref{Prop1}, \ref{lema.1.1}, and \eqref{density-est-1}, we achieve
\begin{equation}\label{momun-pre-3}
\begin{aligned}
&\|a\|_{\widetilde L^\infty_t(\dot B^{\varepsilon}_{\frac{2}{\varepsilon},1})}
\leq C_0,\quad
\|\nabla\Pi\|_{L^1_t(L^2)}\leq C_0, \quad \|u\|_{L^1_t(\dot B^2_{2,1})}\leq
C_0, \\
&
\|a\Delta u\|_{L^1_t(\dot B^{0}_{2,1})}
\leq
C_0, \quad\|(u\cdot\nabla)u\|_{L^1_t(\dot B^{0}_{2,1})} \leq C_0.
\end{aligned}
\end{equation}
Inserting \eqref{momun-pre-3} into \eqref{momun-pre-2} ensures that
\begin{equation}\label{momun-pre-4}
\|\nabla\Pi\|_{L^1_t(\dot B^{0}_{2,1})}
\leq
C_0.
\end{equation}
On the other hand, thanks to \eqref{T-D}, \eqref{a.4}, \eqref{a.5} and \eqref{a-b-5}, we readily deduce that
\begin{equation}\label{momun-pre-5}
\|u\|_{\widetilde L^\infty_t(\dot B^0_{2,1})}
\leq
C_0.
\end{equation}
While from the momentum equations in \eqref{1.1} and \eqref{momun-pre-3}, one has
\begin{equation}
\begin{split}
 \|\pa_t u\|_{L^1_t(\dot B^{0}_{2,1})}&\lesssim \|(u\cdot\nabla)u \|_{L^1_t(\dot B^{0}_{2,1})}+\|\Delta u-\grad\Pi\|_{L^1_t(\dot B^{0}_{2,1})}+\|a(\Delta u-\grad\Pi)\|_{L^1_t(\dot B^{0}_{2,1})}\\
 &\leq\,C_0+C(1+\|a\|_{\widetilde L^\infty_t(\dot B^{\varepsilon}_{\frac{2}{\varepsilon},1})})(\|u\|_{L^1_t(\dot B^2_{2,1})}+\|\grad\Pi\|_{L^1_t(\dot B^{0}_{2,1})})\leq C_0,
\end{split}
\end{equation}
which follows \eqref{momun-pre-0}. This ends the proof of Proposition \ref{prop-unif-bdd-1}.
\end{proof}

\renewcommand{\theequation}{\thesection.\arabic{equation}}
\setcounter{equation}{0}
\section{The proof of Theorem \ref{thm1.1}}

We now turn to the proof of Theorem \ref{thm1.1}.

\begin{proof}[Proof  of Theorem \ref{thm1.1}] We divide the proof
into two steps.\smallskip

\no{\bf Step 1.} Existence of strong solutions.

\smallskip

 Given $\rho_0$ with $a_0:=\frac{1}{\rho_0}-1\in\dot B^{\varepsilon}_{\frac{2}{\varepsilon},1}(\R^2)$ and satisfying
\eqref{thma.1},  $u_0\in\dot B^0_{2,1}(\R^2),$ we first mollify the initial data
to be
\beq \label{o.1} a_{0,n}\eqdefa\,a_0\ast j_n,\andf
u_{0,n}\eqdefa u_0\ast j_n,\eeq where
$j_n(|x|)=n^{2}j(|x|/n)$ is the standard Friedrich's mollifier. Then
we deduce  from the standard well-posedness theory of inhomogeneous
Navier-Stokes system (see \cite{danchin04} for instance) that
\eqref{1.1}  has a unique
solution $(\rho_n,u_n,\na\Pi_n)$ on $[0,T^\ast_n[$ for some positive
time $T^\ast_n.$ It is easy to observe from \eqref{o.1} that
\beno
\begin{split}
\|a_{0,n}\|_{\dot B^{\varepsilon}_{\frac{2}{\varepsilon},1}}
\le
C\|a_0\|_{\dot B^{\varepsilon}_{\frac{2}{\varepsilon},1}}
\andf
\|u_{0,n}\|_{\dot B^{0}_{2,1}}\leq C\|u_0\|_{B^{0}_{2,1}},
\end{split}
\eeno
so, under
the assumptions of Theorem \ref{thm1.1}, we infer from Lemma
\ref{lema.1.1} that there holds \beq\label{o.2}
\begin{aligned}
\|u_n\|_{L^1_t(\dot B^2_{2,1})}\leq C_0\andf
\|a_{n}\|_{\widetilde L^\infty_t(\dot B^{\varepsilon}_{\frac{2}{\varepsilon},1})}
\leq C_0,
\end{aligned}
\eeq for $t< T^\ast_n.$
 Without loss of generality, we may assume $T^\ast_n$ is the lifespan of the
approximate solutions $(\rho_n,u_n,\na\Pi_n).$ Then by virtue of
\cite{danchin04} and \eqref{o.2}, we conclude that
$T_n^\ast \geq \,T_1$ for some positive constant $T_1$. With \eqref{o.2}, we get, by using a standard
compactness argument, that \eqref{1.1} has a solution
$(\rho,u,\na\Pi)$ so that
$a\in C([0,T_1[;\,\dot B^{\varepsilon}_{\frac{2}{\varepsilon},1}(\R^2)),$
$u\in C([0,T_1[;\,\dot B^{0}_{2,1}(\R^2))\cap  L^1([0,T_1[;\,
\dot B^{2}_{2,1}(\R^2)),$ $\p_tu, \na\Pi\in L^1_{loc}([0,T_1[;\dot B^0_{2,1}(\R^2)).$
Furthermore, we can find some  $t_0\in
(0,T_1)$ such that $u(t_0)\in H^1(\R^2)$. Based on the initial data $a(t_0)\in \dot B^{\varepsilon}_{\frac{2}{\varepsilon},1}(\R^2)$ and $u(t_0)\in H^1(\R^2)$, we may deduce the global existence of the solution to \eqref{1.1}
according to \cite{DM, PZZ}. This completes the
proof of the existence of the global solution to \eqref{1.1}.

\smallskip

\no{\bf Step 2.} Uniqueness of strong solutions.\

Let's first say from \eqref{density-est-1} that for any $t>0$
\begin{equation}\label{T-1}
\|a\|_{\widetilde L^\infty_t(\dot B^1_{2,1})}\le\|a_0\|_{\dot B^1_{2,1}}e^{C\|u\|_{L^1_t(\dot B^2_{2,1})}}.
\end{equation}

Let $(\rho^i,u^i,\nabla\Pi^i)$ with $i=1,2$ be two solutions of
\eqref{1.1} which satisfies \eqref{AG} and $\rho=\frac{1}{1+a}.$
We denote \beno (\delta a,\delta
u,\nabla\delta\Pi) \eqdefa
(a^2-a^1,u^2-u^1,\nabla\Pi^2-\nabla\Pi^1).
\eeno
Then the system
for $(\delta a,\delta u,\nabla\delta\Pi)$ reads
\beq\label{u.3}
\left\{\begin{array}{l}
\displaystyle \pa_t\delta a+u^2\cdot\nabla\delta a=-\delta u\cdot\nabla a^1\\
\displaystyle \pa_t\delta u+(u^2\cdot\nabla)\delta u
-(1+a^2)(\Delta\delta u-\grad\delta\Pi)=\d F, \\
\displaystyle \dv\,\delta u = 0, \\
\displaystyle (\delta a,\delta u)|_{t=0}=(0,0),
\end{array}
\right. \eeq
where $\d F$ is determined by
$$
\begin{aligned}
\d F=-(\delta u\cdot\nabla)u^1+\delta a(\Delta u^1-\nabla\Pi^1).
\end{aligned}
$$
For $\delta u,$ we first write the momentum equation of \eqref{u.3} as
\begin{equation}\label{Pre-1}
\partial_t\delta u+(u^2\cdot\nabla)\delta
u-(1+S_k a^2)(\Delta\delta u-\nabla\delta \Pi) =H
\end{equation}
with
$$
H=(a^2-S_k a^2)(\Delta\delta u-\nabla\delta \Pi) -\delta
u\cdot\nabla u^1+\delta a(\Delta u^1-\nabla\Pi^1).
$$
Applying Proposition
\ref{prop-uniqueness-1} to \eqref{Pre-1} yields that for
$\forall \, 0<t\leq T$
\begin{equation}\label{estimate-uniqueness-velosity-1}
\begin{aligned}
\|\delta u\|_{{L}^{\infty}_t(B^{-1}_{2,\infty})}&+\|\delta
u\|_{\widetilde{L}^1_t(B^{1}_{2, \infty})}
\le
Ce^{CT2^k}
\bigl(\|\nabla\delta\Pi\|_{\widetilde L^1_T(B^{-1}_{2,\infty})}
+\|H\|_{\widetilde L^1_T(B^{-1}_{2,\infty})}\bigr).
\end{aligned}
\end{equation}
On the other hand, applying  $\dv$ to the momentum equation of \eqref{u.3} yields
\begin{equation}\label{est-uniq-2}
\dv[(1+a^2)\nabla\delta\Pi]=\dv\,G
\end{equation}
with
$$
\begin{aligned}
G=&a^2\Delta\delta u-\delta u\cdot\nabla
u^1-u^2\cdot\nabla\delta u +\delta a(\Delta u^1-\nabla\Pi^1)
\\
=&(a^2-S_ma^2)\Delta\delta u+S_ma^2\Delta\delta u-\delta u\cdot\nabla
u^1-u^2\cdot\nabla\delta u +\delta a(\Delta u^1-\nabla\Pi^1)
\\
\eqdefa &\sum_{\ell=0}^5\mbox{I}_{\ell}.
\end{aligned}
$$
Thanks to Propositions \ref{lema.1.1} and \ref{small-density-1},  we get that, for any small constant $c_0>0$, there exist sufficiently large $j_0 \in \mathbb{N}$ and a positive existence time $T_1$ such that $ \|a^2-S_ja^2\|_{\widetilde L^\infty_t(B^1_{2,1})} <c_0$ for any $j\geq\,j_0$ and $t\in [0, T_1]$.
Then applying Proposition \ref{prop-H-negtive} to \eqref{est-uniq-2} leads to
\begin{equation*}\label{uniqueness-pressure-elli-1-a}
\begin{split}
\|\grad\delta\Pi\|_{\widetilde{L}^{1}_{t} ({B}^{-1}_{2, \infty})}
\lesssim
 \bigl(1+2^{j}\|a^2\|_{\widetilde L^\infty_t(B^{1}_{2,1})}^2\bigr)
 \bigl(\|G\|_{\widetilde L^1_t(B^{-2}_{2,\infty})}
 +\|\dv\,G\|_{\widetilde L^1_t(B^{-2}_{2,\infty})}\bigr).
\end{split}
\end{equation*}
While by Lemma \ref{lem2.1} and Bony's decomposition, one can see
$$
\|\mbox{I}_1\|_{\widetilde L^1_t(B^{-2}_{2,\infty})}
+\|\dv\,\mbox{I}_1\|_{\widetilde L^1_t(B^{-2}_{2,\infty})}
\lesssim
\|\mbox{I}_1\|_{\widetilde L^1_t(B^{-1}_{2,\infty})}
\lesssim
\|a^2-S_ma^2\|_{\widetilde L^\infty_t(B^1_{2,1})}
\|\delta u\|_{\widetilde L^1_t(B^1_{2,\infty})},
$$
$$
\begin{aligned}
\|\mbox{I}_2\|_{\widetilde L^1_t(B^{-2}_{2,\infty})}
&+\|\dv\,\mbox{I}_2\|_{\widetilde L^1_t(B^{-2}_{2,\infty})}
\lesssim
\|T_{S_ma^2}\Delta\delta u\|_{\widetilde L^1_t(B^{-2}_{2,\infty})}
+\|T_{\Delta\delta u}S_ma^2\|_{\widetilde L^1_t(B^{-2}_{2,\infty})}
\\&
+\|\cR(S_ma^2,\Delta\delta u)\|_{\widetilde L^1_t(B^{-1}_{2,\infty})}
+\|T_{\nabla S_ma^2}\Delta\delta u\|_{\widetilde L^1_t(B^{-2}_{2,\infty})}
+\|T_{\Delta\delta u}\nabla S_ma^2\|_{\widetilde L^1_t(B^{-2}_{2,\infty})}
\\&
\lesssim
2^m
\|\delta u\|_{\widetilde L^1_t(B^0_{2,\infty})}.
\end{aligned}
$$
Similarly, one has
$$
\begin{aligned}
&\|(\mbox{I}_3,\mbox{I}_4)\|_{\widetilde L^1_t(B^{-2}_{2,\infty})}
+\|\dv\,(\mbox{I}_3,\mbox{I}_4)\|_{\widetilde L^1_t(B^{-2}_{2,\infty})}\\
&\lesssim
\|\mbox{I}_3\|_{\widetilde L^1_t(B^{-1}_{2,\infty})}
+\|T_{u^2}\nabla\delta u\|_{\widetilde L^1_t(B^{-2}_{2,\infty})}
+\|T_{\nabla\delta u}u^2\|_{\widetilde L^1_t(B^{-2}_{2,\infty})}+\|\cR(u^2_i,\partial_i\delta u)\|_{\widetilde L^1_t(B^{-1}_{2,\infty})}
\\&\qquad+\|T_{\nabla u^2}\nabla\delta u\|_{\widetilde L^1_t(B^{-2}_{2,\infty})}
+\|T_{\nabla\delta u}\nabla u^2\|_{\widetilde L^1_t(B^{-2}_{2,\infty})}
+\|\cR(\partial_{\ell}u^2_i,\partial_i\delta u_{\ell})\|_{\widetilde L^1_t(B^{-2}_{2,\infty})}
\\&
\lesssim
\int_0^t\|\delta u\|_{B^{-1}_{2,\infty}}
\bigl(\|u^1\|_{B^1_{\infty,1}}+\|u^2\|_{B^1_{\infty,1}}\bigr)d\tau
\end{aligned}
$$
and
$$
\begin{aligned}
\|\mbox{I}_5\|_{\widetilde L^1_t(B^{-2}_{2,\infty})}
+\|\dv\,\mbox{I}_5\|_{\widetilde L^1_t(B^{-2}_{2,\infty})}
\lesssim
\int_0^t\|\delta a\|_{L^2}
\bigl(\|\Delta u^1\|_{L^2}+\|\nabla\Pi^1\|_{L^2}\bigr)d\tau.
\end{aligned}
$$
Thus, we obtain
\begin{equation}\label{uniqueness-pressure-elli-1-a}
\begin{split}
\|\grad\delta\Pi\|_{\widetilde{L}^{1}_{t} ({B}^{-1}_{2, \infty})}
\lesssim
 \Bigl\{1+&2^{j}\|a^2\|_{\widetilde L^\infty_t(B^{1}_{2,1})}^2\Bigr\}
 \Bigl\{
 \|a^2-S_ma^2\|_{\widetilde L^\infty_t(B^1_{2,1})}
\|\delta u\|_{\widetilde L^1_t(B^1_{2,\infty})}
\\&
+2^m\|\delta u\|_{\widetilde L^1_t(B^0_{2,\infty})}
 +\int_0^t\|\delta u\|_{B^{-1}_{2,\infty}}
\bigl(\|u^1\|_{B^1_{\infty,1}}+\|u^2\|_{B^1_{\infty,1}}\bigr)d\tau
\\&\qquad
 +\int_0^t\|\delta a\|_{L^2}
\bigl(\|\Delta u^1\|_{L^2}+\|\nabla\Pi^1\|_{L^2}\bigr)d\tau
  \Bigr\}.
\end{split}
\end{equation}
Toward the estimate of $\|H\|_{\widetilde L^1_t(B^{-1}_{2,\infty})}$, by using Bony's decomposition again, we get
$$
\begin{aligned}
\|H\|_{\widetilde L^1_t(B^{-1}_{2,\infty})}
&\lesssim
\|a^2-S_k a^2\|_{\widetilde L^\infty_t(B^1_{2,1})}
\bigl(\|\Delta\delta u\|_{\widetilde L^1_t(B^{-1}_{2,\infty})}+
\|\nabla\delta \Pi\|_{\widetilde L^1_t(B^{-1}_{2,\infty})}\bigr)
\\&
+\int_0^t\|\delta u\|_{B^{-1}_{2,\infty}}\|u^1\|_{B^1_{\infty,1}}d\tau
+\int_0^t\|\delta a\|_{L^2}\bigl(\|\Delta u^1\|_{L^2}+\|\nabla\Pi^1\|_{L^2}\bigr)d\tau.
\end{aligned}
$$
Taking $k_0$ sufficiently large and $0<T_2(\leq T_1)$ small enough, one may achieve, due to  \eqref{T-1}, that, for any $k\geq\,k_0$ and $t\in (0, T_2]$
\begin{equation}\label{small-refe-1}
\|a^2- S_ka^2\|_{\widetilde L^{\infty}_T({B}^{1}_{2,1})} \le c_0,
\end{equation}
Therefore, thanks to \eqref{estimate-uniqueness-velosity-1}, \eqref{uniqueness-pressure-elli-1-a}
and \eqref{small-refe-1}, we prove
$$
\begin{aligned}
\|\delta u\|_{{L}^{\infty}_t(B^{-1}_{2,\infty})}&+\|\delta
u\|_{\widetilde{L}^1_t(B^{1}_{2, \infty})}
\lesssim
\Bigl\{1+2^{j}\|a^2\|_{\widetilde L^\infty_t(B^{1}_{2,1})}^2\Bigr\}
 \Bigl\{
 \|a^2-S_ma^2\|_{\widetilde L^\infty_t(B^1_{2,1})}
\|\delta u\|_{\widetilde L^1_t(B^1_{2,\infty})}
\\&
+2^m\|\delta u\|_{\widetilde L^1_t(B^0_{2,\infty})}
 +\int_0^t\|\delta u\|_{B^{-1}_{2,\infty}}
\bigl(\|u^1\|_{B^1_{\infty,1}}+\|u^2\|_{B^1_{\infty,1}}\bigr)d\tau
\\&
 +\int_0^t\|\delta a\|_{L^2}
\bigl(\|\Delta u^1\|_{L^2}+\|\nabla\Pi^1\|_{L^2}\bigr)d\tau
  \Bigr\}.
\end{aligned}
$$
Taking $m_0$ sufficiently large and the positive time $T_3 (\leq\, T_2)$ small enough, we obtain that, for any $m \geq\,m_0$ and $t\in (0, T_3]$
\begin{equation}\label{diff-velocity-est-1}
\begin{aligned}
&\|\delta u\|_{{L}^{\infty}_t(B^{-1}_{2,\infty})}+\|\delta
u\|_{\widetilde{L}^1_t(B^{1}_{2, \infty})}
\lesssim
\Bigl\{1+2^{j}\|a^2\|_{\widetilde L^\infty_t(B^{1}_{2,1})}^2\Bigr\}
 \Bigl\{
2^m\|\delta u\|_{\widetilde L^1_t(B^0_{2,\infty})}
 \\&\qquad\quad
 +\int_0^t\|\delta u\|_{B^{-1}_{2,\infty}}
\bigl(\|u^1\|_{B^1_{\infty,1}}+\|u^2\|_{B^1_{\infty,1}}\bigr)d\tau
 +\int_0^t\|\delta a\|_{L^2}
\bigl(\|\Delta u^1\|_{L^2}+\|\nabla\Pi^1\|_{L^2}\bigr)d\tau
  \Bigr\}.
\end{aligned}
\end{equation}
On the other hand, by a classical estimate of the transport equation, we get from the first equation in \eqref{u.3} that
\begin{equation}\label{diff-density-esti-1}
\|\delta a(\tau)\|_{L^2}
\le
\int_0^{\tau}\|(\delta u\cdot\nabla)a_1\|_{ L^2}ds
\lesssim
\int_0^{\tau}\|\delta u\|_{L^\infty}ds.
\end{equation}
While thanks to the interpolation inequality, one may prove that
$$
\begin{aligned}
2^m\bigl(1+2^{j}\|a^2\|_{\widetilde L^\infty_t(B^{1}_{2,1})}^2\bigr)
& \|\delta u\|_{\widetilde L^1_t(B^0_{2,\infty})}
\lesssim
2^m\bigl(1+2^{j}\|a^2\|_{\widetilde L^\infty_t(B^{1}_{2,1})}^2\bigr)
\|\delta u\|_{\widetilde L^1_t(B^{-1}_{2,\infty})}^{\f12}
\|\delta u\|_{\widetilde L^1_t(B^{1}_{2,\infty})}^{\f12}
\\&
\le
\eta\|\delta u\|_{\widetilde L^1_t(B^{1}_{2,\infty})}
+c_{\eta}2^{2m}\bigl(1+2^{2j}\|a^2\|_{\widetilde L^\infty_t(B^{1}_{2,1})}^4\bigr)
\int_0^t\|\delta u\|_{B^{-1}_{2,\infty}}d\tau
\end{aligned}
$$
As a result, we get
\begin{equation*}
\begin{aligned}
&\|\delta u\|_{{L}^{\infty}_t(B^{-1}_{2,\infty})}+\|\delta
u\|_{\widetilde{L}^1_t(B^{1}_{2, \infty})}\\
&\le
C(a_0,k,j,m,\eta)
\bigl\{\int_0^t\|\delta u\|_{B^{-1}_{2,\infty}}
\bigl(1+\|u^1\|_{B^1_{\infty,1}}+\|u^2\|_{B^1_{\infty,1}}\bigr)d\tau
\\&\qquad\qquad\qquad\qquad\qquad
 +\int_0^t\|\delta u\|_{L^1_{\tau}(L^\infty)}
\bigl(\|\Delta u^1\|_{L^2}+\|\nabla\Pi^1\|_{L^2}\bigr)d\tau
  \bigr\},
\end{aligned}
\end{equation*}
then for $t$ small enough, we obtain
\begin{equation}\label{lip-est-0}
\|\delta u\|_{{L}^{\infty}_t(B^{-1}_{2,\infty})}+\|\delta
u\|_{\widetilde{L}^1_t(B^{1}_{2, \infty})}
\lesssim
\int_0^t\|\delta u\|_{L^1_{\tau}(L^\infty)}
\bigl(\|\Delta u^1\|_{L^2}+\|\nabla\Pi^1\|_{L^2}\bigr)d\tau.
\end{equation}
Let $N$ be an arbitrary positive integer which will be determined later on, then
$$
\begin{aligned}
\|\delta u\|_{L^1_{\tau}(L^\infty)}
&\le
\|\delta u\|_{L^1_{\tau}(\dot B^0_{\infty,1})}\\
&\le
\sum_{q\le-N}\|\dot\Delta\delta u\|_{L^1_{\tau}(L^\infty)}
+\sum_{1-N\le q\le N}\|\dot\Delta\delta u\|_{L^1_{\tau}(L^\infty)}
+\sum_{q\geq N+1}\|\dot\Delta\delta u\|_{L^1_{\tau}(L^\infty)}.
\end{aligned}
$$
Hence, due to Bernstein's inequality, we infer
$$
\begin{aligned}
\|\delta u\|_{L^1_{\tau}(L^\infty)}
&\lesssim
2^{-N}\|\delta u\|_{L^1_{\tau}(L^2)}
+N\|\delta u\|_{\widetilde L^1_{\tau}(\dot B^1_{2,\infty})}
+2^{-N}\|\nabla\delta u\|_{L^1_{\tau}(L^\infty)}
\\&
\lesssim
2^{-N}\|\delta u\|_{L^1_{\tau}(L^2)}
+N\|\delta u\|_{\widetilde L^1_{\tau}(B^1_{2,\infty})}
+2^{-N}\|\nabla\delta u\|_{L^1_{\tau}(L^\infty)}
\end{aligned}
$$
If we choose $N$ such that
$$
N\approx \ln\bigl(e+\frac{\|\delta u\|_{L^1_{\tau}(L^2)}+\|\nabla\delta u\|_{L^1_{\tau}(L^\infty)}}
{\|\delta u\|_{\widetilde L^1_{\tau}(B^1_{2,\infty})}}\bigr),
$$
then there holds
$$
\begin{aligned}
\|\delta u\|_{L^1_{\tau}(L^\infty)}
&\lesssim
\|\delta u\|_{\widetilde L^1_{\tau}(B^1_{2,\infty})}
\ln\bigl(e+\frac{\|\delta u\|_{L^1_{\tau}(L^2)}+\|\nabla\delta u\|_{L^1_{\tau}(L^\infty)}}
{\|\delta u\|_{\widetilde L^1_{\tau}(B^1_{2,\infty})}}\bigr),
\end{aligned}
$$
and then
\begin{equation}\label{infty-est-1}
\begin{aligned}
\|\delta u\|_{L^1_{\tau}(L^\infty)}
\lesssim
\|\delta u\|_{\widetilde L^1_{\tau}(B^1_{2,\infty})}
\ln\bigl(e+\frac{\sum_{i=1}^2\{\tau\|u^i\|_{L^\infty_{\tau}(L^2)}
+\|\nabla u^i\|_{L^1_{\tau}(L^\infty)}\}}
{\|\delta u\|_{\widetilde L^1_{\tau}(B^1_{2,\infty})}}\bigr).
\end{aligned}
\end{equation}
Notice that for $\alpha\geq 0$ and $x\in(0,1],$ there holds
$$
\ln(e+\alpha x^{-1})\le\ln(e+\alpha)(1-\ln x).
$$
Thus, plugging \eqref{infty-est-1} into \eqref{lip-est-0} leads to
\begin{equation}\label{infty-est-2}
\begin{aligned}
&\|\delta u\|_{{L}^{\infty}_t(B^{-1}_{2,\infty})}
+\|\delta u\|_{\widetilde{L}^1_t(B^{1}_{2, \infty})}\\
&\lesssim
\int_0^t\|\delta u\|_{\widetilde{L}^1_{\tau}(B^{1}_{2, \infty})}
\bigl(1-\ln\|\delta u\|_{\widetilde{L}^1_{\tau}(B^{1}_{2, \infty})}\bigr)
\bigl(\|\Delta u^1\|_{L^2}+\|\nabla\Pi^1\|_{L^2}\bigr)d\tau.
\end{aligned}
\end{equation}
As $\int_0^1\frac{dx}{x(1-\ln x)}=+\infty,$ and $\|\Delta u^1\|_{L^2}+\|\nabla\Pi^1\|_{L^2}$ is locally integral in $\mathbb{R}^+$,
then by Osgood's lemma (Lemma \ref{lem-Osgood}), we obtain that $\delta u(t)=0$, which together with
\eqref{diff-density-esti-1} and \eqref{uniqueness-pressure-elli-1-a} implies that
$\delta a(t)=\delta\nabla \Pi(t)=0$ for all $t\in[0,T]$ with $T$ small. Applying an inductive argument implies that $\delta u(t)=\delta a(t)=\delta\nabla \Pi(t)=0$ for all $t >0$.

Furthermore, applying \eqref{infty-est-2} (up to a slight modification) to the system \eqref{1.1general}, we may readily prove that the solution $(a,\, u) \in C(\R_+;\,B^1_{2, 1}(\R^2))\times C(\R_+;\,\dot B^{0}_{2,1}(\R^2))$ depends continuously on the initial data $(a_0,\, u_0) \in B^1_{2, 1}(\R^2)\times \dot B^{0}_{2,1}(\R^2)$. This completes the proof of Theorem \ref{thm1.1}.
\end{proof}

\appendix

\setcounter{equation}{0}
\section{Littlewood-Paley analysis}\label{abbendixb}

The proof  of Theorem \ref{thm1.1}
requires a dyadic decomposition of the Fourier variables, which is
called the Littlewood-Paley decomposition. Let us briefly explain how
it may be built in the case $x\in\R^2$ (see e.g. \cite{BCD}). Let
$\varphi$ be a smooth function  supported in the ring
$\mathcal{C}\eqdefa \{
\xi\in\R^2,\frac{3}{4}\leq|\xi|\leq\frac{8}{3}\}$  and $\chi(\xi)$
be a smooth function  supported in the ball $\mathcal{B}\eqdefa \{
\xi\in\R^2,\ |\xi|\leq\frac{4}{3}\}$ such that
\begin{equation*}
 \sum_{j\in\Z}\varphi(2^{-j}\xi)=1 \quad\hbox{for}\quad \xi\neq
 0\quad\mbox{and}\quad \chi(\xi)+ \sum_{q\geq 0}\varphi(2^{-q}\xi)=1\quad\hbox{for \ all }\quad
 \xi\in\R^2.
\end{equation*}
Now, for $u\in{\mathcal S}'(\R^2),$ we set
\begin{equation}\label{LP-decom-sum-1}
  \begin{aligned}
&\forall q\in\Z,\quad \dot\Delta_qu=\varphi(2^{-q}\textnormal{D})u\hspace{1cm}\mbox{and}
\hspace{1cm}
\dot S_qu=\sum_{j\leq q-1}\Delta_{j}u.
\\&
q\geq0,\quad\Delta_qu=\varphi(2^{-q}\textnormal{D})u,\quad
\Delta_{-1}u=\chi(\textnormal{D})u\quad\mbox{and}\quad
S_qu=\sum_{-1\le q'\le q-1}\Delta_{q'}u.
\end{aligned}
\end{equation}
We have the formal decomposition
$$
u=\sum_{q\in\Z}\dot\Delta_q \,u,\quad\forall\,u\in {\mathcal {S}}'(\R^2)/{\mathcal{P}}[\R^2]
\quad\mbox{and}\quad
u=\sum_{q\geq-1}\Delta_q \,u,\quad\forall\,u\in {\mathcal {S}}'(\R^2),
$$
where ${\mathcal{P}}[\R^2]$ is the set of polynomials (see \cite{PE}).
Moreover, the Littlewood-Paley decomposition satisfies the
property of almost orthogonality:
\begin{equation}\label{Pres_orth}
\begin{aligned}
&\dot\Delta_k\dot\Delta_q u\equiv 0
\quad\mbox{if}\quad\vert k-q\vert\geq 2
\quad\mbox{and}\quad\dot\Delta_k(\dot S_{q-1}u\dot\Delta_q u)
\equiv 0\quad\mbox{if}\quad\vert k-q\vert\geq 5,
\\&
\Delta_k\Delta_q u\equiv 0
\quad\mbox{if}\quad\vert k-q\vert\geq 2
\quad\mbox{and}\quad\Delta_k( S_{q-1}u\Delta_q u)
\equiv 0\quad\mbox{if}\quad\vert k-q\vert\geq 5.
\end{aligned}
\end{equation}
We recall now the definition of nonhomogeneous and homogeneous Besov spaces from
\cite{Tri}.
\begin{defi}\label{def1.1}
{\sl  Let $(p,r)\in[1,+\infty]^2,$ $s\in\R$ and $u\in{\mathcal
S}'(\R^2),$ we set
$$
\|u\|_{B^s_{p,r}}\eqdefa\Big(2^{qs}\|\Delta_q
u\|_{L^{p}}\Big)_{\ell ^{r}}
\quad\mbox{and}\quad
\|u\|_{\dot B^s_{p,r}}\eqdefa\Big(2^{qs}\|\dot\Delta_q
u\|_{L^{p}}\Big)_{\ell ^{r}},
$$
with the usual modification if $r=\infty.$
\begin{itemize}
\item
For $s\in\R,$ we define
$B^s_{p,r}(\R^2)\eqdefa \big\{u\in{\mathcal S}'(\R^2)\;\big|\; \Vert
u\Vert_{B^s_{p,r}}<\infty\big\}.$

\item
For $s<\frac{2}{p}$ (or $s=\frac{2}{p}$ if $r=1$), we  define $\dot
B^s_{p,r}(\R^2)\eqdefa \big\{u\in{\mathcal S}'(\R^2)\;\big|\; \Vert
u\Vert_{\dot B^s_{p,r}}<\infty\big\}.$

\item
If $k\in\N$ and $\frac{2}{p}+k\leq s<\frac{2}{p}+k+1$ (or
$s=\frac{2}{p}+k+1$ if $r=1$), then $\dot B^s_{p,r}(\R^2)$ is
defined as the subset of distributions $u\in{\mathcal S}'(\R^2)$
such that $\partial^\beta u\in\dot B^{s-k}_{p,r}(\R^2)$ whenever
$|\beta|=k.$
\end{itemize}}
\end{defi}

\begin{rmk}\label{rmk1.1}
\begin{enumerate}
  \item We point out that if $s>0$ then $B^s_{p,r}=\dot B^s_{p,r}\cap L^p$ and
$$
\|u\|_{B^s_{p,r}}\approx \|u\|_{\dot B^s_{p,r}}+\|u\|_{L^p}.
$$
\item It is easy to verify that the homogeneous Besov
space $\dot{B}^s_{2,2}(\R^2)$ (resp. ${B}^s_{2,2}(\R^2)$) coincides with the classical
homogeneous Sobolev space $\dot{H}^{s}(\R^2)$ (resp. $H^s(\R^2)$) and
$\dot{B}^s_{\infty,\infty}(\R^2)$ coincides with the classical
homogeneous H\"older space $\dot{C}^s(\R^2)$  when $s$ is not
positive integer, in case $s$ is a nonnegative integer,
$\dot{B}^s_{\infty,\infty}(\R^2)$ coincides with the classical
homogeneous Zygmund space $\dot{C}^s_{\ast}(\R^3).$
  \item Let $s\in \mathbb{R}, 1\le p,r\le\infty$, and $u \in
\cS'(\R^2).$ Then $u$ belongs to $\dot{B}^{s}_{p, r}(\R^2)$ if and
only if there exists $\{c_{j, r}\}_{j \in \mathbb{Z}} $ such that
$\|c_{j, r}\|_{\ell^{r}} =1$ and
\begin{equation*}
\|\dot{\Delta}_{j}u\|_{L^{p}}\leq C c_{j, r} \, 2^{-j s }
\|u\|_{\dot{B}^{s}_{p, r}}\qquad \mbox{for all}\ \ j\in\Z.
\end{equation*}
\end{enumerate}
\end{rmk}
For the convenience of the reader, in what follows, we recall some
basic facts on Littlewood-Paley theory, one may check \cite{BCD, Tri} for more details.

\begin{lem}\label{lem2.1}
{\sl Let $\cB$ be a ball   and $\cC$ a ring of $\R^2.$
 A constant $C$ exists so
that for any positive real number $\lambda,$ any non negative
integer $k,$ any smooth homogeneous function $\sigma$ of degree $m,$
and any couple of real numbers $(a, \; b)$ with $ b \geq a \geq 1,$
there hold
\begin{equation}
\begin{split}
&\Supp \hat{u} \subset \lambda \mathcal{B} \Rightarrow
\sup_{|\alpha|=k} \|\pa^{\alpha} u\|_{L^{b}} \leq  C^{k+1}
\lambda^{k+ 2(\frac{1}{a}-\frac{1}{b} )}\|u\|_{L^{a}},\\
& \Supp \hat{u} \subset \lambda \mathcal{C} \Rightarrow
C^{-1-k}\lambda^{ k}\|u\|_{L^{a}}\leq
\sup_{|\alpha|=k}\|\partial^{\alpha} u\|_{L^{a}}\leq
C^{1+k}\lambda^{ k}\|u\|_{L^{a}},\\
& \Supp \hat{u} \subset \lambda \mathcal{C} \Rightarrow \|\sigma(D)
u\|_{L^{b}}\leq C_{\sigma, m} \lambda^{ m+2(\frac{1}{a}-\frac{1}{b}
)}\|u\|_{L^{a}}. \end{split}\label{2.1}
\end{equation}}
\end{lem}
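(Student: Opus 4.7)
The statement to be proved is the classical Bernstein inequalities: if $\hat u$ is localized in a ball or annulus of radius $\lambda$, derivatives are controlled by powers of $\lambda$, and one can pass between different $L^p$ norms at a cost of $\lambda^{2(1/a-1/b)}$. My plan is the standard scaling/Young's-inequality argument.

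\textbf{Step 1: the first estimate (direct Bernstein for balls).} I would fix once and for all a smooth cut-off $\phi\in C_c^\infty(\R^2)$ with $\phi\equiv 1$ on $\mathcal B$ and $\Supp\phi\subset 2\mathcal B$. Since $\Supp\hat u\subset\lambda\mathcal B$, one has $u=\phi(\lambda^{-1}D)u$, whence for every multi-index $\alpha$,
\[
\partial^\alpha u = h_{\alpha,\lambda}*u,\qquad
h_{\alpha,\lambda}(x)=\lambda^{2+|\alpha|}\,h_\alpha(\lambda x),\qquad
h_\alpha\eqdef\mathcal F^{-1}\bigl((i\xi)^\alpha\phi(\xi)\bigr).
\]
By Young's inequality with $1+\tfrac1b=\tfrac1a+\tfrac1c$,
\[
\|\partial^\alpha u\|_{L^b}\le \|h_{\alpha,\lambda}\|_{L^c}\|u\|_{L^a},
\]
and a direct scaling computation yields $\|h_{\alpha,\lambda}\|_{L^c}=\lambda^{|\alpha|+2(1/a-1/b)}\|h_\alpha\|_{L^c}$. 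The Schwartz character of $h_\alpha$ gives $\|h_\alpha\|_{L^c}\le C^{|\alpha|+1}$ uniformly in $c\in[1,\infty]$ (one checks this by differentiating under the Fourier transform and applying Leibniz).

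\textbf{Step 2: the second estimate (reverse Bernstein on annuli).} The upper bound is an immediate consequence of Step 1 since $\Supp\widehat{\partial^\alpha u}\subset\lambda\mathcal C\subset\lambda(2\mathcal B)$. For the lower bound, pick $\widetilde\varphi\in C_c^\infty$ supported in an annulus slightly larger than $\mathcal C$, identically $1$ on $\mathcal C$. On $\Supp\widetilde\varphi$, $|\xi|$ is bounded away from $0$, so writing $|\xi|^{2k}=\sum_{|\alpha|=k}c_\alpha \xi^{2\alpha}$ (for instance, expanding $(\xi_1^2+\xi_2^2)^k$), we get
\[
u=\widetilde\varphi(\lambda^{-1}D)u=\sum_{|\alpha|=k}c_\alpha\,\lambda^{-2k}\,m_\alpha(\lambda^{-1}D)\partial^{2\alpha}u,
\]
where $m_\alpha(\xi)=\widetilde\varphi(\xi)\xi^{-\alpha}/|\xi|^{2k}$ is smooth with compact support. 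The convolution kernel of $m_\alpha(\lambda^{-1}D)$ is bounded in $L^1$ independently of $\lambda$, so Young yields $\|u\|_{L^a}\le C^{1+k}\lambda^{-k}\sup_{|\alpha|=k}\|\partial^\alpha u\|_{L^a}$, which rearranges to the claimed inequality.

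\textbf{Step 3: the multiplier estimate for homogeneous symbols.} Since $\sigma$ is smooth and $m$-homogeneous away from the origin, $\sigma(\xi)=\sigma(\xi)\,\widetilde\varphi(\lambda^{-1}\xi)$ on $\Supp\hat u$ for a suitable $\widetilde\varphi\in C_c^\infty$ supported in an annulus. Hence $\sigma(D)u=K_\lambda* u$ with
\[
K_\lambda(x)=\mathcal F^{-1}\bigl(\sigma(\xi)\widetilde\varphi(\lambda^{-1}\xi)\bigr)(x)=\lambda^{m+2}K_1(\lambda x),\qquad K_1\eqdef\mathcal F^{-1}(\sigma\widetilde\varphi),
\]
using homogeneity $\sigma(\lambda\eta)=\lambda^m\sigma(\eta)$ in the scaling change of variable. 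Applying Young with $1+\tfrac1b=\tfrac1a+\tfrac1c$ and $\|K_\lambda\|_{L^c}=\lambda^{m+2(1/a-1/b)}\|K_1\|_{L^c}$ closes the estimate, with constant $C_{\sigma,m}$ depending only on a finite number of seminorms of $\sigma\widetilde\varphi$.

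\textbf{Main obstacle.} The only nontrivial point is the uniformity of the constants: tracking that the factor $C^{k+1}$ in Steps 1 and 2 does not deteriorate as $k$ grows. This is handled by the explicit Leibniz expansion when differentiating $\phi$ or $\widetilde\varphi$, using that all derivatives of the fixed cut-offs are controlled by a single geometric constant. Everything else is routine Fourier-analytic scaling.
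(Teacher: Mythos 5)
The paper does not prove this lemma at all: it is recalled in the Appendix as a classical fact (the Bernstein and reverse Bernstein inequalities) with a pointer to the references [BCD, Tri], so there is no in-paper argument to compare against. Your proposal is precisely the standard proof from those references and is correct: reduction to a fixed cut-off equal to one on the ball or annulus, convolution with a rescaled Schwartz kernel, Young's inequality with the exponent relation $1+\frac1b=\frac1a+\frac1c$, the algebraic identity $|\xi|^{2k}=\sum_{|\alpha|=k}c_\alpha\xi^{2\alpha}$ for the lower bound on annuli, and the homogeneity of $\sigma$ to extract $\lambda^m$ in the multiplier estimate. One small slip: in Step 2 the multiplier should be $m_\alpha(\xi)=c_\alpha\,\widetilde\varphi(\xi)\,\xi^{\alpha}/|\xi|^{2k}$ paired with $\partial^{\alpha}u$ (absorbing one factor $\xi^\alpha$ into the smooth compactly supported symbol, which costs $\lambda^{k}$ and leaves the net factor $\lambda^{-k}$), rather than $\widetilde\varphi(\xi)\xi^{-\alpha}/|\xi|^{2k}$ paired with $\partial^{2\alpha}u$ as written; as stated the powers of $\xi$ do not match the decomposition of $|\xi|^{2k}$. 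This is a typo rather than a gap, since the surrounding sentence makes clear the intended standard argument, and the geometric control $C^{k+1}$ of the constants is handled correctly by the Leibniz expansion you describe.
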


In the rest of the paper, we shall frequently use homogeneous Bony's
decomposition \cite{Bony}:
\begin{equation}\label{bony}
uv=T_u v+T'_vu=T_u v+T_v u+\cR(u,v),
\end{equation}
where
\begin{equation*}
\begin{split}
&T_u v\eqdefa\sum_{q \in \mathbb{Z}}\dot S_{q-1}u\dot\Delta_q v,\qquad
T'_vu\eqdefa\sum_{q}\dot\Delta_q u\dot S_{q+2}v,\\
&\cR(u,v)\eqdefa\sum_{q}\dot\Delta_q u \widetilde{\dot\Delta}_{q}v,\quad
\quad \mbox{and}\quad \widetilde{\dot\Delta}_{q}v\eqdefa
\sum_{|q'-q|\leq 1}\dot\Delta_{q'}v.
\end{split}
\end{equation*}

In  order to obtain a better description of the regularizing effect
of the transport-diffusion equation, we will use Chemin-Lerner type
spaces from
\cite{Ch99, CL}.
\begin{defi}\label{chaleur+}
{\sl Let $s\in\R,$
$(r,\lambda,p)\in[1,\,+\infty]^3,$  $T\in]0,\,+\infty]$.
and $u\in{\mathcal S}'(\R^2),$ we set
$$
\|u\|_{\widetilde L^\lambda_T(B^s_{p,r})}\eqdefa\Big(2^{qs}\|\Delta_q
u\|_{L^\lambda_T(L^{p})}\Big)_{\ell ^{r}}
\quad\mbox{and}\quad
\|u\|_{\widetilde L^\lambda_T(\dot B^s_{p,r})}\eqdefa\Big(2^{qs}\|\dot\Delta_q
u\|_{L^\lambda_T(L^{p})}\Big)_{\ell ^{r}},
$$
with the usual modification if $r=\infty.$
\begin{itemize}
\item
For $s\in\R,$ we define
$\widetilde L^\lambda_T(B^s_{p,r})\eqdefa \big\{u\in{\mathcal S}'(\R^2)\;\big|\; \Vert
u\Vert_{\widetilde L^\lambda_T(B^s_{p,r})}<\infty\big\}.$

\item
For $s\le\f2p$ (resp. $s\in\R$), we define
$\widetilde{L}^{\lambda}_T(\dot B^s_{p\,r}(\R^2))$ as the completion
of $C([0,T],\cS(\R^2))$ by norm $\|\cdot\|_{\widetilde L^\lambda_T(\dot B^s_{p,r})}.$
\end{itemize}
In the
particular case when $p=r=2,$ we denote $\widetilde L^\lambda_T(B^s_{2,2})$
(resp. $\widetilde L^\lambda_T(\dot B^s_{2,2})$) by
$\wt{L}^\la_T({H}^s)$ (resp. $\wt{L}^\la_T(\dot{H}^s).$
}
\end{defi}

\begin{rmk}\label{rmk1.2} It is easy to observe that for $\theta\in[0,1],$ we have
\begin{equation}\label{Interpo}
\Vert u\Vert_{\widetilde{L}^{\lambda}_T(\dot B^s_{p,r})} \leq \Vert
u\Vert_{\widetilde{L}^{\lambda_1}_T( \dot B^{s_1}_{p,r})}^{\theta}
\Vert u\Vert_{\widetilde{L}^{\lambda_2}_T(\dot
B^{s_2}_{p,r})}^{1-\theta}
\end{equation}
with
$\frac{1}{\lambda}=\frac{\theta}{\lambda_1}+\frac{1-\theta}{\lambda_2}$
and $s=\theta s_1+(1-\theta)s_2.$ Moreover, Minkowski inequality
implies that
$$
\Vert u\Vert_{\widetilde{L}^{\lambda}_T(\dot B^s_{p,r})} \leq \Vert
u\Vert_{L^{\lambda}_T(\dot B^s_{p,r})}
\quad\mbox{if}\quad\lambda\leq r \quad\hbox{and}\quad \Vert
u\Vert_{L^{\lambda}_T(\dot B^s_{p,r})} \leq \Vert
u\Vert_{\widetilde{L}^{\lambda}_T(\dot B^s_{p,r})}
\quad\mbox{if}\quad r\leq\lambda.
$$
\end{rmk}

\bigbreak \noindent {\bf Acknowledgments.}
G. Gui is supported in part by the National Natural Science Foundation of China under Grants 11571279 and 11331005.

\end{document}